\newtheorem{theorem}{Theorem}[section]
\newtheorem{lemma}{Lemma}[section]
\newtheorem{proposition}{Proposition}[section]
\theoremstyle{definition}
\def\ge{\geqslant}\def\le{\leqslant}
\def\~{\widetilde}
\begin{document}
%\title[2D Euler  on   strip:   an   estimate on the support of positive vorticity ]{2D Euler equation on the strip: \\ an upper estimate on the support of positive vorticity}
 \title[ On the growth of the support of 
 positive 
 %non-negative
 vorticity \\
for 2D Euler in an infinite cylinder.
 ]{  On the growth of the support of 
 positive 
 %non-negative
 vorticity \\
 for 2D Euler equation in an infinite cylinder}
\author{K. Choi, S. Denisov  }

\address{
\begin{flushleft}
\vspace{1cm} Kyudong Choi: kchoi@unist.ac.kr\\ \vspace{0.1cm}
Ulsan National Institute of Science and Technology  \\
Department of Mathematical Sciences \\
UNIST-gil 50, Ulsan, 44919,
Republic of Korea\\
\vspace{1cm} Sergey Denisov: denissov@wisc.edu\\\vspace{0.1cm}
University of Wisconsin--Madison\\  Mathematics Department\\
480 Lincoln Dr., Madison, WI, 53706,
USA\vspace{0.1cm}\\and\\\vspace{0.1cm}
Keldysh Institute for Applied Mathematics, Russian Academy of Sciences\\
Miusskaya pl. 4, 125047 Moscow, RUSSIA\\
\end{flushleft}
}

\renewcommand{\thefootnote}{\fnsymbol{footnote}}
\footnotetext{\emph{Key words:} 2D Euler, diameter of support, positive vorticity,
confinement, large time behavior, an incompressible flow, an infinite strip,
 a two-dimensional cylinder
\quad\emph{2010 AMS Mathematics Subject Classification:} 76B03, 35Q35 }
\renewcommand{\thefootnote}{\arabic{footnote}}

\begin{abstract}
We consider the incompressible 2D Euler equation in an infinite cylinder
 $\mathbb{R}\times \mathbb{T}$
 in the case when the initial vorticity is non-negative, bounded, and compactly supported. We study $d(t)$, the diameter of the support of vorticity, and prove that it allows the following bound:
$d(t)\le
 Ct^{1/3}\log^2t$ when $t\to\infty$.
  %To obtain this upper estimate, w
 %We use
% conservation of a regularized energy to derive a uniform-in-time bound of a certain weighted
% total mass. We combine this uniform bound with the Euler evolution to conclude that the horizontal velocity decays in space and this  decay rate can be arbitrarily fast outside the region
% $|x|\lesssim t^{1/3}\log^2t$ for large time.
\end{abstract}\vspace{1cm}
\maketitle
\large

{\Large \section{Introduction}}
Consider the incompressible 2D Euler equation in vorticity form on an infinite
cylinder $S:=\mathbb{R}\times\mathbb{T}$, where $\mathbb{T}=[0,2\pi)$ is a unit circle:
\begin{equation}\label{ecyl}
\partial_t\theta
 + u \cdot \nabla \theta
=%\epsilon
% \nabla \theta\cdot u,
0,
  \quad \theta|_{t=0}=\theta_0.
\end{equation}

%Constant $\epsilon$ can be chosen arbitrarily due to possibility of time rescaling. In later estimates, we might need to make it small to replace $\lesssim $ by $\le$.
The velocity $u(x,y,t)$ is related to
the scalar vorticity $\theta$ via a cylindrical Biot-Savart law, which will be introduced in the next section (see formula \eqref{bscyl}).
This problem is identical to the Euler equation in $\mathbb{R}^2$ with $\theta_0$ being $2\pi$-periodic in $y$ in the sense that we can obtain the two-dimensional cylinder $S$  from the infinite strip
$\mathbb{R}\times[0,2\pi]$ by identifying its sides.
 In the paper, we  use notation $z=(x,y)$, $\xi=(\xi_1,\xi_2)$, and $dz=dxdy, d\xi=d\xi_1d\xi_2$ for shorthand.

We assume that $\theta_0$ has a compact support and $\theta_0(x,y)\in L^\infty(S)$. For the 2D Euler equation on a cylinder, the existence and uniqueness of compactly supported 
%weak 
solution 
in the sense of distributions
from the class $L^\infty(S)$ can be proved in a similar manner as  in the case of the whole space \cite{yu}. We refer the reader to \cite{ke} or Appendix in \cite{bd}.  If the initial data assumes further $C^{m,\gamma}$-regularity, one can obtain $C^{m,\gamma}$-regular solution  for all time by adapting the method in Chapter 4 of \cite{mb}. In this paper, however, we do not need smoothness that high and from now on a solution   means a  solution of \eqref{ecyl} in the sense of distributions with $u$ given by  \eqref{bscyl}.\\

For any function $f$ compactly supported on $S$, we define
$$d_f:=\sup_{z,\xi\in \it{\rm supp}(f)}|x-\xi_1|,$$
where ${\rm supp}(f)$ denotes the essential support of $f$.

In this paper, we are interested in controlling the support of  nonnegative vorticity for large time. The main result  is the following upper estimate on  $%d(t):=
d_{\theta(t)}$:\smallskip

\begin{theorem}\label{main_theorem} Suppose that an  initial data $\theta_0$
is non-negative, compactly supported, and belongs to $L^\infty(S)$. %Assume $0\le\theta_0\le 1$ and $d_{\theta_0}\le 1$.
Then, the corresponding solution $\theta$
% of \eqref{ecyl} 
satisfies % we have
\begin{equation}\label{main_estimate}
%d(t)
d(t):=d_{\theta(t)}
\le C (t+1)^{\frac 13}\log^2 (2+t)\quad \mbox{for any } t>0\,,
\end{equation} where the constant $C$ % is a %universal constant.
depends only on $d_{\theta_0}$ and $\|\theta_0\|_{L^\infty}$.\\
\end{theorem}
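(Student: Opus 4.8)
The plan is to combine the conservation laws of the flow with sharp pointwise bounds on the cylindrical Biot--Savart kernel $K=(K_1,K_2)$ of \eqref{bscyl}, and then to close a nonlinear differential inequality for the right-most (and, symmetrically, left-most) extent of the support. First I would record the structure of $K$ by expanding the stream function $\psi$ in a Fourier series in $y$: the zero mode $\psi_0(x)$ is $y$-independent and grows linearly in $x$, while every nonzero mode decays like $e^{-|k|\,|x-\xi_1|}$. Two consequences are decisive. First, the horizontal velocity $u_1=-\partial_y\psi$ is produced solely by the $y$-dependent part of $\theta$, and the kernel $K_1$ obeys the screening bound $|K_1(z,\xi)|\le Ce^{-|x-\xi_1|}$ away from the diagonal together with the two-dimensional bound $|K_1(z,\xi)|\le C/|z-\xi|$ near it. Second, $K_1$ is antisymmetric, so $\int_S u_1\theta\,dz=0$ and the horizontal center of vorticity $\int_S x\,\theta\,dz$ is conserved; I normalize it to $0$.

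The heart of the preliminary estimates is a uniform bound on the first moment, $\int_S|x|\,\theta(z,t)\,dz\le C$, and this is where the geometry of the cylinder enters. Decomposing $\psi=\psi_0(x)+\psi'$ into its $y$-average and its fluctuation (so that $\psi'$ has zero $y$-mean), the renormalized kinetic energy $E=\tfrac12\int_S|\nabla\psi'|^2-\tfrac1{4\pi}\int_{\mathbb R}\mu(x)\big(m-\mu(x)\big)\,dx$, with $\mu(x,t)=\int_{\xi_1>x}\theta\,d\xi$ and $m=\int_S\theta$, is finite and conserved. The fluctuation part is controlled by the conserved enstrophy: using $\Delta\psi'=\theta'$ and the Poincar\'e (spectral-gap) inequality in the periodic variable, $\int_S|\nabla\psi'|^2=-\int_S\psi'\theta'\le\|\psi'\|_{L^2}\|\theta'\|_{L^2}\le\|\theta_0\|_{L^2}^2$. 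Hence $\int_{\mathbb R}\mu(m-\mu)\,dx\le C$, and a short argument using the normalization $\int_S x\theta\,dz=0$ converts this into the bound on $\int_S|x|\theta\,dz$; all constants depend only on $d_{\theta_0}$ and $\|\theta_0\|_{L^\infty}$ (via $m$, $\|\theta_0\|_{L^2}$, and $E$). In particular one gets the tail estimate $\mu(s,t)\le C/s$ for $s>0$. Alongside this I would use the conserved Casimirs, in particular that $\|\theta(t)\|_{L^\infty}=\|\theta_0\|_{L^\infty}$ and that the measure of $\mathrm{supp}\,\theta(t)$ is preserved.

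Next I would establish a local velocity estimate: for any $\rho>0$, $|u_1(z,t)|\le C\big(\|\theta_0\|_{L^\infty}^{1/2}\,\nu_\rho(z)^{1/2}+\|\theta_0\|_{L^1}e^{-\rho}\big)$, where $\nu_\rho(z)=\int_{|x-\xi_1|<\rho}\theta(\xi)\,d\xi$ is the mass in a horizontal $\rho$-neighborhood. The first term comes from splitting the near-diagonal singular kernel at a disc of optimized radius, and the second from the exponential screening of the far field; choosing $\rho=\rho(t)\sim\log t$ renders the screened term negligible (say $\le t^{-2}$). I would then track the outer mass profile through the transport identity $\partial_t\mu(s,t)=\int_{\{x=s\}}u_1\theta\,dy$, and feed the tail bound $\nu_\rho\lesssim 1/b$ near the front $b(t):=\sup\{x:\mu(x,t)>0\}$ into the velocity estimate to obtain a differential inequality for $b(t)$.

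The step I expect to be the main obstacle is reaching the sharp exponent. The naive closure, using only $|u_1|\lesssim\nu_\rho^{1/2}$ with $\nu_\rho\lesssim 1/b$, yields merely $b(t)\lesssim t^{2/3}$, and the square-root velocity law appears to make $t^{2/3}$ a robust barrier. To pass to $t^{1/3}$ one must exploit more than the total mass near the front: since $u_1$ is generated by the $y$-fluctuation of $\theta$ alone, and since the background shear $u_2\to\pm m/4\pi$ of the zero mode stretches and homogenizes that fluctuation in the periodic direction, the horizontal velocity at the front is depleted faster than the crude bound predicts. Quantifying this fluctuation-suppression and iterating it against the first-moment control — while truncating the exponential tail at the scale $\rho\sim\log t$ — is what I expect to upgrade $t^{2/3}$ to $t^{1/3}$ and to generate the two logarithmic factors in \eqref{main_estimate}. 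Making this depletion mechanism quantitative, and bootstrapping it to the optimal power, is the hardest part of the argument.
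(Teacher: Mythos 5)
Your preliminaries are essentially sound and largely parallel the paper's: conservation of mass, horizontal center of mass, and a renormalized energy; a uniform bound on $\int_S |x|\theta\,dz$ yielding the tail estimate $\int_{|x|>r}\theta\,dz\lesssim 1/r$; the near/far splitting of $u_1$ using the exponential screening of $k_1$ together with the $1/|z-\xi|$ bound at the diagonal; and the resulting rough bound $d(t)\lesssim t^{2/3}$, which is exactly \eqref{easy_con}. Your route to the first-moment bound (controlling the fluctuation energy $\int_S|\nabla\psi'|^2$ by conserved enstrophy via the Poincar\'e inequality in $y$, then extracting $\int\mu(m-\mu)\,dx\lesssim 1$ from the conserved renormalized energy) is a legitimate alternative to the paper's Proposition \ref{main_prop}, which instead works directly with $e_0=\int\int\theta\theta\,\Gamma$ split into near-diagonal (logarithmic) and far (linear growth) blocks.

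The genuine gap is that you stop exactly where the theorem begins, and the mechanism you conjecture in place of the missing step is not the one that works. The paper's passage from $t^{2/3}$ to $t^{1/3}$ has nothing to do with shear-induced depletion of the $y$-fluctuation of $\theta$; it comes from Lemma \ref{euler_evol_lem}: differentiating the \emph{localized second moment} $\int_{x>a}(x-a)^2\theta\,dz$ and using the screening bound $\left|(x-\xi_1)k_1\right|\lesssim e^{-|x-\xi_1|/2}$ yields the quadratic recursion \eqref{lemma_eq_evol}, which bounds the tail mass beyond $2a$ by $a^{-2}\int_0^t$ of the \emph{square} of the tail mass beyond $a/2$ (plus an exponentially small error). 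Feeding in the a priori bound $f_n\lesssim 4^{-n}$ at scales $a\sim 4^n$ and iterating (Lemma \ref{main_lemma}, Proposition \ref{kur}) doubles the gain at each step, giving $\int_{|x|>4^{n+j}}\theta\lesssim 4^{-n-c_42^j}$ for $t\lesssim 4^{3n}$; equivalently, the mass beyond $\sim t^{1/3}\log^2 t$ is $O(t^{-L})$ for arbitrarily large $L$ (estimate \eqref{main_decay_estimate}). It is this super-polynomial smallness of the far tail — not any improvement of the square-root velocity law itself — that makes $R_L(t)\sim t^{1/3}\log^2 t$ a barrier for the Lagrangian trajectories (Lemma \ref{sort1}). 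By contrast, your proposed depletion mechanism is both unquantified (as you acknowledge) and physically doubtful: near the front, $u_2=k_2*\theta\to\pm\frac12 m_0$, i.e., the zero-mode velocity is asymptotically a \emph{constant} translation in $y$, which produces no mixing, while the differential part of the shear decays exponentially in $|x|$. So the core idea of the proof — the self-improving quadratic tail recursion across geometric scales — is absent from your proposal, and what you offer in its place would not close the argument.
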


An important example of $\theta_0$ is the characteristic function $\chi_{\Omega_0}$ of a compact subset $\Omega_0$ of $S$, a patch. Then, $\theta(z,t)=\chi_{\Omega(t)}$ and one can study dynamics of $\Omega(t)$ in time. Note that  
the periodic extension of $\theta_0$ into the whole space $\mathbb{R}^2$ is not compactly supported, in  general. \\% in $\mathbb{R}^2$.\\

 For the problem when the data is compactly supported in $\mathbb{R}^2$ (so it's not periodic in $y$), the upper bound $d(t)\le C(t+1)^{1/3}$ was obtained in \cite{marchioro}. Later,  it was improved
   to $((t+1)\log (t+2))^{1/4}$ in \cite{isg} (see also \cite{Serfati}).
%The paper \cite{isg}
The key idea of the proof in \cite{isg} was to use the following conserved quantities for Euler equation in $\mathbb{R}^2$:
$$\mbox{the total mass }\int_{\mathbb{R}^2}\theta(z)dz,$$
$$\mbox{the center of mass } \int_{\mathbb{R}^2}z\theta(z)dz,\quad\mbox{and}$$
$$\mbox{the moment of inertia } \int_{\mathbb{R}^2}|z|^2\theta(z)dz\,.$$

In particular, the moment of inertia plays  an important role because its conservation in time shows that,
when the initial vorticity is non-negative and compactly supported near the origin, only a small portion of  $\theta$
can concentrate far away from zero at any given time.  For exterior domains, we refer to \cite{iln, marchioro_ext}.  \\

In order to have an analogous  confinement
 for the 2D Euler on a cylinder $\mathbb{R}\times \mathbb{T}$, one needs to  establish conserved quantities first. It does not seem to be the case that the Euler evolution on a cylinder  preserves the second moment
 $$\int_S x^2\theta(x,y,t)dz.$$
 However, the following quantity:
 $$e_0:=\int_S \theta(z,t) \Psi(z,t) dz\,$$
  is conserved, where
  the stream function $\Psi$ will be introduced in the next section. This allows us to show that
    \begin{equation}\label{intro_unif_bdd}\int_S |x|\theta(x,y,t)dz \end{equation}
   is uniformly bounded in time if the initial vorticity is non-negative (see  Proposition \ref{main_prop}).
 This quantity $e_0$ can be regarded as a regularized energy. In fact, the standard
kinetic energy given by
$$\int_S |u(z,t)|^2 dz$$
 is not finite for non-negative vorticity, in general.  \\

The second ingredient of the proof is related to the cylindrical Biot-Savart law.
It shows that, for the horizontal component of velocity $u_1:=k_1*\theta$, the kernel $k_1$ takes the form $$k_1=
\frac{-\sin(y)}{2(\cosh(x)-\cos(y))}\,.
$$ Thus, it is smaller than  $\frac{C}{|z|}$ near $0$ and decays exponentially for large $|x|$. The decay so strong makes interaction between the distant parts of vorticity essentially negligible. Note that the exponential bound for the kernel $k_1$ has been used in \cite{gs_2} to study
  2D Navier-Stokes equation on a cylinder.  \\

Then, our proof proceeds by controlling the integrals
\[
\int_{x>r}\theta(z,t)dz
\]
for different values of $r\in \mathbb{R}^+$. Using the strong decay of $k_1$, we establish the following   inequality  for $r\gtrsim 1$ (see \eqref{lemma_eq_evol}): \begin{equation*}\int_{|x|>4r}\theta(z,t) dz\lesssim r^{-2}  \int_0^t\left( \left(\int_{|x|>r} \theta(z,\tau)dz\right)^2+ \mbox{ small error}\right)d\tau.   \end{equation*} Then, we analyze the sequence of these estimates  taking $r\sim4^n, n\in \mathbb{N}$ to obtain a bound for $u_1$.   It shows that $u_1$ is very small outside the region  $|x|\gtrsim t^{1/3}\log^2t$ for $t\gtrsim 1$  (see \eqref{main_decay_estimate}) and this will imply  the main estimate \eqref{main_estimate}.\\

For stability questions, %for solutions on $S$, 
there were several publications in which stability of steady states on $S$ was studied. In the paper \cite{bm}, the Couette flow was considered. In \cite{cm}, the case of increasing steady vorticity was studied and, more recently, the stability of a rectangular patch was investigated in \cite{bd}.\\

The structure of the paper is as follows. In the next section, the cylindrical Biot-Savart law  and some conserved quantities
of the Euler equation will be introduced. In Section 3, we will prove the bound for
\eqref{intro_unif_bdd}
%one proposition on a decay of mass in space will appear
 and discuss some of its easy consequences. Section 4 contains the proof of
% estimate the decay of mass even in time to prove
  Theorem~\ref{main_theorem}. We collect auxiliary results in the last section.\bigskip

In this paper, we  use the following standard notation. If two non-negative functions $f_1$ and $f_2$ satisfy $f_1\le Cf_2$ with some absolute constant $C$, we write $f_1\lesssim f_2$. If $f_1\lesssim f_2$ and $f_2\lesssim f_1$,  we use $f_1\sim f_2$. If $f_{1(2)}(t)$ satisfy $f_1(t)\le Cf_2(t)$ for all $t>1$, we write $f_1=\mathcal{O}(f_2)$ as $t\to\infty$, here $C$ might depend on some fixed parameters but not on $t$. As usual, $\mathbb{N}$ denotes the set of natural numbers,  $\mathbb{Z}^+:=\mathbb{N}\cup \{0\}$.

{\Large \section{Preliminaries}}

   In this paper, we use the cylindrical Biot-Savart law (see
\cite{am} or \cite{gs, bd} for the detail):
\begin{equation}\label{bscyl}
u(x,y)=(u_1,u_2)= k*\theta=\int_S k(x-\xi_1,y-\xi_2)\theta(\xi_1,\xi_2)d\xi,\end{equation}
   where
the kernel $k$ is given by %law is then
\begin{equation*}%\label{bscyl}
%u =  k*\theta,  \quad
k(x,y) %= \nabla^\perp\Gamma(x,y)
=(k_1,k_2)=
\frac{(-\sin(y),\sinh(x))}{2(\cosh(x)-\cos(y))}.
\end{equation*}

Note that  if we define the stream function $\Psi$ of $\theta$ by
$\Psi(x,y)=\Gamma*\theta%=\int_S\Gamma(x-\xi_1,y-\xi_2)\theta(\xi_1,\xi_2)d\xi
$, where
\begin{equation*}%\label{bscyl}
%u =\nabla^\perp\Psi= k*\theta,  \quad
\Gamma(x,y) = \frac{1}{2}\log
(\cosh(x)-\cos(y)),
%,\quad k(x,y) = \nabla^\perp\Gamma(x,y) =\frac{(-\sin(y),\sinh(x))}{2(\cosh(x)-\cos(y))}.
\end{equation*} then  the   function  $\Psi$
 solves the elliptic
problem
\begin{equation*}\label{strcyl}
(2\pi)^{-1} \Delta \Psi = \theta,\quad \lim_{x\rightarrow
+\infty}\partial_x \Psi(x,y) = - \lim_{x\rightarrow
-\infty}\partial_x \Psi(x,y), \quad |\Psi(x,y)|\le C(|x|+1)
\end{equation*} and the velocity can be recovered by
$u =\nabla^\perp\Psi=(-\partial_y\Psi,\partial_x\Psi)$. %This cylindrical Biot-Savart law,
We observe that $|\Gamma(z)|\sim |\log|z||$ for small $|z|$
%$\Gamma\in L^1_{loc}(S)$
and $\Gamma(z)\sim |x|$ for large $|x|$.\\
\\
% holds.

%The existence and uniqueness of the global solution $\theta\in C^\infty(S)$ can be proved in the same way as it is proved for $\mathbb{R}^2$.

For any bounded and compactly supported $\theta_0$, we denote
\[
\mbox{the total mass }  m_0:=\int_S \theta_0(z)dz,
\]
\[
\mbox{the horizontal center of  mass }  h_0:=\int_S x\theta_0(z)dz,\quad \mbox{ and}
\]
 \[
\mbox{the regularized energy }  e_0:=\int_S \theta_0(z)\Psi_0(z)dz=\int_{S}\int_{S} \theta_0(z) \theta_0(\xi) \Gamma(z-\xi)d\xi dz.
\]

{\bf Remark.}\label{dependency_remark}
Both  $m_0$ and $e_0$ are controlled by diameter $d_{\theta_0}$ and  $\|\theta_0\|_{L^\infty{(S)}}$.
Indeed, we have
\begin{equation}\label{sot1}
|m_0|\le \|\theta_0\|_{L^1{(S)}}  \lesssim d_{\theta_0} \cdot  \|\theta_0\|_{L^\infty{(S)}}.
\end{equation}
For the regularized energy $e_0$, we notice that  there is $l\in \mathbb{R}$, such that $\theta_0$ is supported in the rectangle $\{z\in S\, |\, |x-l|\leq d_{\theta_0}\}$. For any such $z$, the stream function
$\Psi_0$ of $\theta_0$ satisfies $$|\Psi_0(z)|\le \|\theta_0\|_{L^\infty{(S)}}\cdot\int_{\{\xi\in S\,|\,  |\xi_1-l|\leq d_{\theta_0}\}} | \Gamma(z-\xi)| d\xi\le \|\theta_0\|_{L^\infty{(S)}}\cdot  C_{d_{\theta_0}}$$ since $|z-\xi|\lesssim 1+d_{\theta_0}$ and $\Gamma(\cdot)$ is locally integrable thanks to \[|\Gamma(z)|\sim |\log|z||\] which holds for small $|z|$. %and continuous except $z=0$.
 So we have
\begin{equation} \label{sot2}
|e_0|\le   \|\theta_0\|_{L^1{(S)}}\cdot  \|\theta_0\|_{L^\infty{(S)}}\cdot  C_{d_{\theta_0}} \lesssim d_{\theta_0} \cdot  \|\theta_0\|^2_{L^\infty{(S)}}\cdot C_{d_{\theta_0}}. \end{equation}
   %the diameter and the $L^\infty$ bound of $\theta_0$.
%where   we define the diameter of any compactly supported function $f$ on $S$ by $$d_f:=\sup_{z,\xi\in \it{supp}(f)}|x-\xi_1|.$$

{\bf Remark.}
The kinetic energy
$\int_S |u|^2dz$ is not finite, in general. Indeed,  assume  that the data $\theta_0$ is non-negative and non-trivial. Since
 %because
$k_2\rightarrow \pm \frac{1}{2}$ as $x\rightarrow \pm\infty $, we get
$|u_2|=|k_2*\theta|\rightarrow  \frac{1}{2}m_0\neq 0$ as $x\rightarrow \pm\infty$. This implies divergence of the integral $\int_S |u|^2dz$.\\

Since we consider an incompressible flow and its vorticity is transported by the flow, the $L^1-$norm and $L^\infty-$norm of $\theta(z,t)$ are preserved in time. In addition to these norms, we have the following conserved quantities. \smallskip
%We start with simple observations
%For any compactly supported function $f$ on $S$, we define its diameter by $$d_f:=\sup_{z,\xi\in \it{supp}(f)}|z-\xi|$$

\begin{lemma}\label{simple_lemma}
For any bounded and compactly supported $\theta_0$,
the Euler evolution on $S$ preserves
the total mass, the horizontal center of mass, and the regularized energy:
\[m_0=\int_S \theta(z,t)dz, \quad h_0=\int_S x\theta(z,t)dz, \quad e_0=\int_{S\times S} \theta(z,t) \theta(\xi,t) \Gamma(z-\xi)d\xi dz\quad\mbox{ for all } t\ge 0.\]

%there exist constants $C_1,C_2,C_3$ such that the corresponding solution $\theta$ satisfies \[\int_S \theta(z,t)=C_1, \quad \int_S x\theta(z,t)=C_2, \quad \int_{S\times S} \theta(z,t) \theta(\xi,t) \Gamma(z-\xi)=C_3\quad\mbox{ for all } t\ge 0.\]
\end{lemma}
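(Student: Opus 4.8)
The plan is to verify each identity by a formal computation that is classical for smooth solutions and then to recover the general case by approximation. Throughout I use incompressibility to rewrite the transport equation in divergence form: since $\nabla\cdot u=0$, equation \eqref{ecyl} reads $\partial_t\theta=-\nabla\cdot(u\theta)$, and the compact support of $\theta(\cdot,t)$ in $x$ kills all boundary terms there, while periodicity removes them in $y$. For the total mass this is immediate: integrating $\partial_t\theta=-\nabla\cdot(u\theta)$ over $S$ and applying the divergence theorem gives $\frac{d}{dt}\int_S\theta\,dz=0$.

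For the horizontal center of mass, differentiating and integrating by parts once yields
$$\frac{d}{dt}\int_S x\,\theta\,dz=\int_S x\,\big(-\nabla\cdot(u\theta)\big)\,dz=\int_S u_1\theta\,dz=\iint_{S\times S}k_1(z-\xi)\,\theta(\xi,t)\theta(z,t)\,d\xi\,dz.$$
The double integral converges absolutely: $k_1\in L^1(S)$, since it is $\mathcal{O}(|z|^{-1})$ near the origin (hence locally integrable in two dimensions) and decays exponentially for large $|x|$, while $\theta(\cdot,t)\in L^1\cap L^\infty$. The key observation is that $k_1$ is \emph{odd}, being even in $x$ and odd in $y$, so that $k_1(-w)=-k_1(w)$. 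Interchanging the roles of $z$ and $\xi$ therefore sends the integrand to its negative, forcing the integral to vanish. Hence the horizontal center of mass is conserved.

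For the regularized energy I would differentiate the double-integral expression and use the symmetry $\Gamma(w)=\Gamma(-w)$ (evenness of $\Gamma$ in both variables) together with relabeling $z\leftrightarrow\xi$ to obtain
$$\frac{d}{dt}\,e_0(t)=2\int_S\partial_t\theta(z,t)\,\Psi(z,t)\,dz=-2\int_S\nabla\cdot(u\theta)\,\Psi\,dz=2\int_S\theta\,(u\cdot\nabla\Psi)\,dz.$$
Here the integration by parts produces no boundary contribution because $u\theta$ is compactly supported (so the linear growth of $\Psi$ is harmless). Since $u=\nabla^\perp\Psi$, the integrand $u\cdot\nabla\Psi=\nabla^\perp\Psi\cdot\nabla\Psi$ vanishes pointwise, and therefore $e_0$ is conserved.

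The main obstacle is making these manipulations rigorous for merely bounded, compactly supported distributional solutions rather than classical ones. The differentiation under the integral sign and the integrations by parts are immediate when the solution is smooth. To pass to the general case I would approximate $\theta_0$ by smooth, compactly supported data and use two facts. First, since $u=k*\theta$ is uniformly bounded in time (recall $k_1\in L^1$ and $k_2$ is bounded), the support of each smooth solution grows at most linearly, so on any interval $[0,T]$ all supports lie in a single fixed compact subset of $S$; this controls the linear growth of $\Gamma$ at infinity. Second, the functionals are continuous along the approximation: $m_0$ and $h_0$ are linear in $\theta$, while $e_0$ is quadratic with the locally integrable weight $\Gamma$, whose only singularity is the integrable $|\log|z||$, so stability under the relevant convergence of vorticities follows from the uniform support bound. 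Executing this approximation carefully is the only genuinely technical point; the three conservation laws themselves reduce to the oddness of $k_1$, the symmetry of $\Gamma$, and the identity $\nabla^\perp\Psi\cdot\nabla\Psi=0$.
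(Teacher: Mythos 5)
Your proof is correct and follows essentially the same approach as the paper: the smooth-case computations (oddness/antisymmetry of $k_1$ for the horizontal center of mass, and $u\cdot\nabla\Psi=\nabla^\perp\Psi\cdot\nabla\Psi=0$, which is equivalent to the paper's $(\Psi^2)$ integration-by-parts trick, for the regularized energy) match the paper's own remark, and your reduction of the distributional case to the smooth one by approximation is precisely what the paper delegates to the mollification argument of \cite{bd}. One minor slip that does not affect the argument: $k_2$ is \emph{not} bounded (it behaves like $|z|^{-1}$ near the origin), but it is locally integrable with bounded tails, so $u$ is still uniformly bounded; in fact only the bound on $u_1$ is needed to confine the supports in the $x$-direction.
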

%Its proof is straightforward for  written
Its proof can be found in Proposition 2.1 of \cite{bd}. \smallskip

{\bf Remark. } If $\theta$ is a smooth solution, this lemma easily follows from the following arguments. The quantity \[\int_S \theta(z,t)dz\] is time-independent because  the velocity $u$ is incompressible. To handle the center of mass, we multiply  equation \eqref{ecyl} by $x$ and integrate over $S$ to get
\[
%\partial_t
\frac{d}{dt}
 \int_{S} x\theta(x,y,t)dz=- \int_{S} x(-\Psi_y\theta_x+\Psi_x\theta_y)dz\,,
\]
where $\theta$ is smooth and compactly supported. Integration by parts gives
\[
- \int_{S} x(-\Psi_y\theta_x+\Psi_x\theta_y)dz
=-\int_{S} \Psi_y\theta dz=\int_{S\times S} \frac{-\sin(y-\xi_2)}{2(\cosh(x-\xi_1)-\cos(y-\xi_2))}\theta(z,t)\theta(\xi,t)dzd\xi=0\,,
\]
because the kernel in this quadratic form is antisymmetric. Consider the regularized energy. Differentiation in time gives
\begin{equation*}\begin{split}
%\partial_t
\frac{d}{dt}&
\left(\int_{S\times S} \theta(z,t) \theta(\xi,t) \Gamma(z-\xi) d\xi dz\right)=%\hspace{3cm}
2\int_{S\times S} \theta_t(z,t)\theta(\xi,t)\Gamma(z-\xi)dzd\xi
\\
&\hspace{2cm}=2\int_S (\Psi_y\theta_x-\Psi_x\theta_y)\Psi dz=\int_S (\Psi^2)_y\theta_x-(\Psi^2)_x\theta_ydz=0
\end{split}\end{equation*}
after integration by parts. For  solutions in the sense of distributions,  a mollification argument has been used in \cite{bd}.

{\bf Remark.}
For any bounded and compactly supported initial vorticity, we have   a trivial bound $$ %d(t):=
d(t)
%\lesssim ( d_{\theta_0} +  t)
=\mathcal{O}(t)\quad  \mbox{ as   }t \to\infty.$$ Indeed, since the vorticity in the Euler equation is transported by the corresponding velocity and our domain is a horizontal cylinder, % to estimate the diameter of the support,
 we only need to estimate the horizontal velocity $u_1=k_1*\theta$,
where $$k_1(z)=\frac{-\sin(y)}{2(\cosh(x)-\cos(y))}.$$
We can use an estimate
$|k_1(z)|\lesssim |z|^{-1}e^{-|x|/2} $  to get the bound $|u_1|\lesssim
\|\theta\|_{L^1{(S)}}+\|\theta\|_{L^\infty{(S)}}
$. %a constant depending only on   $\|\theta(t)\|_{L^1{(S)}}$ and $\|\theta(t)\|_{L^\infty{(S)}}$,
 Since $L^p-$norms of $\theta$ are preserved by the Euler evolution,
 %Since $\|\theta_0\|_{L^1{(S)}}\lesssim d_{\theta_0}\cdot \|\theta_0\|_{L^\infty{(S)}} $,
    we get
  \begin{equation}\label{bdd_speed}
  \sup_{z\in S,t\ge 0}|u_1(z,t)|\lesssim
\|\theta_0\|_{L^1{(S)}}+\|\theta_0\|_{L^\infty{(S)}}\lesssim
  d_{\theta_0}\cdot \|\theta_0\|_{L^\infty{(S)}} + \|\theta_0\|_{L^\infty{(S)}},
  \end{equation}  which implies $d(t)=\mathcal{O}(    t)$.\bigskip

{\Large \section{One proposition and another rough bound on $d(t)$}}
In the following proposition,  the non-negativity of $\theta$ will be crucial.
 %From this remark, we immediately get the following proposition:
\begin{proposition}\label{main_prop} Suppose that $\theta_0$ is non-negative, bounded, and compactly supported on $S$.
If the horizontal center of mass is at $0$,
%$h_0:=\int_S x\theta_0=0$,
then
\begin{equation*}
\sup_{t\ge 0} \int_S |x|\theta(z,t)dz\le  C(d_{\theta_0},\|\theta_0\|_{L^\infty{(S)}}) \,.
\end{equation*}
\end{proposition}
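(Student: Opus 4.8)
The plan is to exploit the conservation of the regularized energy $e_0$ together with the fact that the kernel $\Gamma$ grows \emph{linearly} in the horizontal direction. Concretely, I would decompose
\[
\Gamma(x,y)=\tfrac12|x|+R(x,y),\qquad R(x,y):=\Gamma(x,y)-\tfrac12|x|,
\]
and insert this into the conserved quantity to write
\[
e_0=\frac12\int_{S\times S}\theta(z,t)\theta(\xi,t)\,|x-\xi_1|\,d\xi\,dz+\int_{S\times S}\theta(z,t)\theta(\xi,t)\,R(z-\xi)\,d\xi\,dz .
\]
The first (linear) term should control the spread $\int_S|x|\theta\,dz$ from below, while the second is expected to be a bounded error that is harmless for all times.

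For the main term I would use the hypothesis $h_0=\int_S x\theta\,dz=0$. Pushing the measure $\theta(z,t)\,dz$ forward to the $x$-axis produces a nonnegative measure $\mu$ on $\R$ of total mass $m_0$ and zero mean, and by convexity of $|\cdot|$ (Jensen's inequality) $\int_\R|x-x'|\,d\mu(x)\ge\big|\int_\R(x-x')\,d\mu(x)\big|=m_0|x'|$ for each fixed $x'$; integrating in $x'$ gives $\int_{S\times S}|x-\xi_1|\theta\theta\,\ge m_0\int_S|x|\theta\,dz$. This is the crux of the argument: the mean-zero hypothesis is precisely what converts the linear growth of $\Gamma$ into a genuine lower bound on the first absolute moment. (The centering $h_0=0$ is no loss of generality, since $h_0$ is conserved and the problem is translation invariant in $x$.)

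It then remains to bound the error term, for which I expect the key pointwise facts about $R$ to be the following. From $\cosh x-\cos y\le 2e^{|x|}$ one gets $\Gamma\le\tfrac12|x|+\tfrac12\log 2$, so $R$ is bounded \emph{above} by an absolute constant. From below, $R$ has only a logarithmic singularity at the origin, $R(w)\gtrsim\log|w|$ for small $|w|$ (since $\cosh w_1-\cos w_2\sim\tfrac12|w|^2$ there), and it is bounded for $|w|\ge1$ because $\Gamma(w)-\tfrac12|w_1|\to-\tfrac12\log2$ as $|w_1|\to\infty$ while $w_2$ ranges over the compact $\mathbb{T}$. Since $-\log|w|$ is locally integrable in two dimensions, for each fixed $z$ the inner integral $\int_S\theta(\xi,t)\,(-R(z-\xi))\,d\xi$ is controlled by $\|\theta\|_\infty\int_{|w|<1}(-\log|w|)\,dw+C\|\theta\|_{L^1}$, yielding $\int_{S\times S}\theta\theta(-R)\lesssim\|\theta_0\|_\infty m_0+m_0^2$. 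Combining this with the lower bound, dividing by $m_0$, and using $|e_0|/m_0=|e_0|/\|\theta_0\|_{L^1}\le\|\theta_0\|_\infty C_{d_{\theta_0}}$ (by the Remark) together with $m_0\lesssim d_{\theta_0}\|\theta_0\|_\infty$, one finds that all time-dependence cancels and the desired uniform bound $\int_S|x|\theta\,dz\le C(d_{\theta_0},\|\theta_0\|_\infty)$ follows. The main obstacle I anticipate is the careful verification of the pointwise estimates on $R$ — especially its uniform upper bound and the integrability of its singularity — since these are exactly what keep the quadratic error term bounded \emph{uniformly in $t$}, even though the support of $\theta$ may spread as time grows.
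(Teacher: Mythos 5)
Your proposal is correct, and it reaches the bound by a genuinely different route than the paper, even though both rest on the same conserved quantities ($m_0$, $h_0=0$, $e_0$) and the same two features of $\Gamma$: linear growth in $x$ and an integrable logarithmic singularity. The paper does not use your exact decomposition $\Gamma=\tfrac12|x|+R$; instead it splits $e_0$ into unit-strip blocks $\int_{n<x<n+1}\int_{m<\xi_1<m+1}\theta\theta\,\Gamma$, bounds the near-diagonal blocks ($|n-m|<10$) by $C\|\theta_0\|_{L^\infty}m_0$ via the log estimate, and observes that the far blocks are all \emph{positive} since $\Gamma(\eta)\sim|\eta_1|$ for $|\eta_1|\ge 2$. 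The first moment is then extracted by a pigeonhole argument rather than by Jensen: at least half the mass lies on one side, say $\int_{x>0}\theta\ge m_0/2$, so $\tfrac{m_0}{2}\int_{x<-10}|x|\theta\le\int_{x<-10}\int_{\xi_1>0}\theta\theta|x-\xi_1|\,d\xi\,dz\lesssim|e_0|+m_0\|\theta_0\|_{L^\infty}$, and the mean-zero condition is invoked only at the very end, to reflect this one-sided bound to the other side via $\int_{x>0}x\theta=\int_{x<0}|x|\theta$. Your convexity inequality $\int_{\R}|x-x'|\,d\mu(x)\ge m_0|x'|$ uses the mean-zero hypothesis up front and yields the two-sided bound in one stroke; the price is the verification of the pointwise estimates on $R$ (upper bound $\tfrac12\log 2$ everywhere, lower bound $\log|w|-C$ near the origin, boundedness for $|w|\ge 1$), all of which you state correctly, whereas the paper's strip decomposition needs only the crude asymptotics of $\Gamma$ and no quantitative remainder. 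Two trivial points to make explicit in a final write-up: assume $\theta_0\not\equiv 0$ so that $m_0>0$ before dividing (the paper does this), and note that splitting $e_0$ into the two integrals is legitimate because $\theta(\cdot,t)$ is compactly supported at each fixed time, so both pieces are finite.
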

\begin{proof}
%We will
Bounds \eqref{sot1} and \eqref{sot2} show that it is enough to estimate
$\int_S |x|\theta(z,t)dz$  by $m_0$, $e_0$, and $\|\theta_0\|_{L^\infty{(S)}}$.
Let $t\ge 0$. Then, by Lemma \ref{simple_lemma},
%Let $f$ be any nonnegative, bounded and compactly supported function and \[ \int f=C_1,\quad  f\le C_2,\quad \int xf=0, \quad \int_{S\times S}f(z)f(\xi)\Gamma(z-\xi)=C_3 \]
%Let us bound $\int |x|f$.
 we write
\[
e_0= \int_S\int_S \theta(z,t)\theta(\xi,t)\Gamma(z-\xi)d\xi dz=\sum_{n\in \mathbb{Z}}\sum_{m\in \mathbb{Z}} \int_{n<x<n+1}\int_{m<\xi_1<m+1} \theta(z,t)\theta(\xi,t)\Gamma(z-\xi)d\xi dz.
\]
For each term in the sum which satisfies $|n-m|<10$, we can write
\[
 \left|\int_{n<x<n+1}\int_{m<\xi_1<m+1} \theta(z,t)\theta(\xi,t)\Gamma(z-\xi)d\xi dz\right|\lesssim \|\theta(t)\|_{L^\infty{(S)}}\cdot  \int_{n<x<n+1}\theta(z,t) dz\,.
\]
Indeed, it follows from the logarithmic estimate for the kernel \[
|\Gamma(\eta)|
%=\Gamma(\eta_1,\eta_2)
\sim |\log|\eta||\] which holds for small $|\eta|$. Thus, the sum of all terms for which  $|n-m|<10$ is  bounded  by $C   \|\theta(t)\|_{L^\infty{(S)}}\cdot \int_S\theta(z,t)dz=C \|\theta_0\|_{L^\infty{(S)}}\cdot m_0$. \\

On the other hand, since $\Gamma(\eta)\sim  |\eta_1|$ for $|\eta_1|\ge 2$, all terms for which $|n-m|\ge 10$ satisfy
\[
 \int_{n<x<n+1}\int_{m<\xi_1<m+1} \theta(z,t)\theta(\xi,t)\Gamma(z-\xi)d\xi dz \sim \int_{n<x<n+1}\int_{m<\xi_1<m+1} \theta(z,t)\theta(\xi,t) |x-\xi_1|d\xi dz
\]
and, in particular,  they  are all positive. Suppose that $\theta_0$ is non-trivial so that $\int_S \theta(z,t) dz=m_0>0$. Then, we have  $\int_{x>0}\theta (z,t) dz \ge m_0/2$, or  $\int_{x<0}\theta (z,t)dz\ge m_0/2$, or the both estimates are true. Suppose, e.g.,
\[
 \int_{x>0}\theta(z,t) dz \ge m_0/2.
\]
 Then, we can write
 \begin{equation*}
 \begin{split}
  \frac{m_0}{2}\int_{x<-10}|x|\theta(z,t)dz &\le \int_{x<-10}\int_{\xi_1>0}\theta(z,t)\theta(\xi,t)|x| d\xi dz\\
&  \le \int_{x<-10}\int_{\xi_1>0}\theta(z,t)\theta(\xi,t)|x-\xi_1|d\xi dz \lesssim |e_0|+m_0\cdot \|\theta_0\|_{L^\infty{(S)}}.
 \end{split}
 \end{equation*}
Thus, we get
\[
\int_{x<0}|x|\theta(z,t)dz\lesssim
\frac{1}{m_0}\cdot(|e_0|+m_0\cdot \|\theta_0\|_{L^\infty{(S)}})+ \|\theta_0\|_{L^\infty{(S)}}\,.
%C(C_1,C_2,C_3)
\]
Since the horizontal center of mass is at zero for all time by Lemma \ref{simple_lemma}, we have
\[
\int_{x>0}x\theta(z,t)dz=\int_{x<0}|x|\theta(z,t)dz
\] and, therefore,
\[
\int_S |x|\theta(z,t) dz\le C(e_0,m_0, \|\theta_0\|_{L^\infty{(S)}})\,,
\] where the constant is independent of $t\ge 0$.
%as well. Put $f(\cdot)=\theta(\cdot, t)$. Note that those constants, $C_1, C_2,$ and $C_3$,  can be bounded by a constant  $C=C(d_{\theta_0},\|\theta_0\|_{L^\infty{(S)}})$.

%If $\int xf\neq 0$, then we use translation invariance to get the same conclusion.

\end{proof}

Before proving Theorem \ref{main_theorem},
% stating the main result,
we will show how Proposition \ref{main_prop} can be used to obtain a rough upper estimate on the diameter. Under its assumptions, we get
\begin{equation}\label{sot3}
%\int_{r\le  |x|\le  k+1}\theta(z,t)dz\le
 \int_{r\le  |x| }\theta(z,t) dz %=  \frac{1}{k}\int_{k\le  |x| }{k}\theta(z,t) dz
\le  \frac{1}{r}\int_{r\le  |x| }{|x|}\theta(z,t) dz
\le   \frac{ C(d_{\theta_0},\|\theta_0\|_{L^\infty{(S)}})     }{r}\quad \mbox{ for any } r>0 \mbox{ for any } t\ge0.
\end{equation}
%Denote the diameter $d_{\theta(\cdot,t)}$  simply by $d(t)$.
%we first assume that $\int x \theta$, the center of mass,  is at zero and $\|\theta_0\|_\infty\le 1$.
Then,  we have an estimate for the first component of the velocity:
\begin{equation}\label{sot4}
|u_1(z,t)|\lesssim \int_S \theta(\xi,t) \left|\frac{\sin(y-\xi_2)}{\cosh(x-\xi_1)-\cos(y-\xi_2)}\right|d\xi\lesssim
\int_{|x-\xi_1|\ge 1} {\theta(\xi,t)}{e^{-|x-\xi_1|}}d\xi+\int_{|x-\xi_1|<1} \frac{\theta(\xi,t)}{|z-\xi|}d\xi.%\lesssim_\epsilon \langle x\rangle^{-1/(2+\epsilon)}, \,\epsilon>0.
\end{equation}
 %we use $\left|\frac{\sin(y-\xi_2)}{\cosh(x-\xi_1)-\cos(y-\xi_2)}\right|\lesssim e^{-|x-\xi_1|}$. So
The first integral can be estimated as
\begin{equation*}\begin{split}
&\hspace{2cm} \int_{|x-\xi_1|\ge 1} {\theta(\xi,t)}{e^{-|x-\xi_1|}}d\xi=
\sum_{n\in \mathbb{Z}} \int_{|x-\xi_1|>1, n<\xi_1<n+1}\theta(\xi,t) e^{-|x-\xi_1|}d\xi \\
&\lesssim
\sum_{n\in \mathbb{Z}} e^{-|x-n|}\int_{ n<\xi_1<n+1}\theta(\xi,t)d\xi\le  C(d_{\theta_0},\|\theta_0\|_{L^\infty{(S)}})
\sum_{n\in \mathbb{Z}} e^{-|x-n|}(|n|+1)^{-1}\le \frac{C(d_{\theta_0},\|\theta_0\|_{L^\infty{(S)}})}{|x|+1}\,.
\end{split}\end{equation*}
Here we used \eqref{sot3} to bound $\int_{n<\xi_1<n+1}\theta d\xi$ for $n\neq \{0,-1\}$. For  $n=\{0,-1\}$, we wrote $\int_{n<\xi_1<n+1}\theta d\xi\lesssim \|\theta\|_{L^\infty(S)}$.

The second integral in \eqref{sot4} can be bounded by H\"older
inequality as follows
\[
\int_{|x-\xi_1|<1} \frac{\theta(\xi,t)}{|z-\xi|}d\xi\le \||z-\xi|^{-1}\|_{L^{p}(|x-\xi_1|<1)}\|\theta\|_{L^{p'}(|x-\xi_1|<1)}\lesssim C(\epsilon) \left( \int_{|x-\xi_1|<1}\theta^{p'}d\xi \right)^{\frac {1}{p'}}\,,
\]
where $p=2-\epsilon, \epsilon\in (0,1)$, and $p'$ is defined by $p^{-1}+p'^{-1}=1$. Finally, we have
\[
 \int_{|x-\xi_1|<1}\theta^{p'}d\xi\lesssim \|\theta\|_{L^\infty(S)}^{p'-1} \int_{|x-\xi_1|<1}\theta d\xi\lesssim \frac{C(\epsilon,d_{\theta_0},\|\theta_0\|_{L^\infty{(S)}}) }{|x|+1}\,.
\]
Thus, for the first component in the Lagrangian dynamics of a point $(x(t),y(t))$ in the support of $\theta(z,t)$, we  have
\[
|\dot{x}|\le C(\delta,d_{\theta_0},\|\theta_0\|_{L^\infty{(S)}})  (|x|+1)^{-\frac 12+\delta}
\quad\mbox{ for } t>0\,
\]
with arbitrary $\delta>0$.
This gives the bound \begin{equation}\label{easy_con}d(t)\le  C(\delta_1,d_{\theta_0},\|\theta_0\|_{L^\infty})(1+t)^{\frac 23+\delta_1} \quad\mbox{ for \, every } \delta_1>0.
\end{equation} \bigskip
%The main result of this paper is the  improvement  $d(t)\in \mathcal{O}(t^{1/3}\log^2 t)$ whose proof is given in the next section. The key idea is to   exploit the decay of $u_1$ not only in   $z$ but also in  $t$ (see \eqref{main_decay_estimate}).

{\Large \section{
%Temporal decay of horizontal velocity}
Proof of Theorem \ref{main_theorem}
}}
In this section, we show that the bound \eqref{easy_con} can be improved to
% $\mathcal{O}(t^{(2/3)+\delta})$
 $d(t)=\mathcal{O}(t^{
1/3}\log^2 t)$. To do that we will exploit the decay of $u_1$ both in $z$ and $t$.\\

\begin{proof}[Proof of Theorem \ref{main_theorem}]

Without loss of generality, we suppose that
 $0\le\theta_0\le 1$, $d_{\theta_0}\le 1$, $\int_S  \theta_0(z)dz>0$, and $\int_S x \theta_0(z)dz=0$. %, which is the horizontal center  of mass, is at zero.
Then, $\theta_0$ is supported in
$[-1,1]\times \mathbb{T}$ and $\int_S  \theta_0(z)dz\le 2\pi$. Indeed, notice first that if $\theta(z,t)$ is the solution to \eqref{ecyl} and if $\widehat x\in\mathbb{R}$, then $\theta(x-\widehat x,y,t)$ is the solution to \eqref{ecyl} with the initial data $\theta_0(x-\widehat x,y)$. Thus, choosing $\widehat x$ suitably, we can always assume that $h_0=0$. Then, if  $\theta_0$ is an arbitrary non-trivial, non-negative, bounded, and compactly supported function satisfying $\int_S x \theta_0(z)dz=0$ and giving rise to the solution $\theta$,
we can rescale  $\theta$ as follows.
Put $M: =\|\theta_0\|_{L^\infty}>0$ and  choose $   N\in \mathbb{N}$ large enough so that  $ N\ge d(0)$. Then, define $\widetilde{\theta}$ by
% $\theta$ is a solution then so is $\theta_{R,M}$ where
 $$\widetilde{\theta}(x,y,t):=\frac{1}{M}\theta(N\cdot x,N\cdot y,\frac{t}{M}).$$
% where $M: =\|\theta_0\|_{L^\infty}>0,$ and  $   N\in \mathbb{N}$ is chosen so that  $ d(0)\le  N$.
% in order to get the   estimate \[d(t)\le %C_{(d_0,\|\theta_0\|_{L^\infty})} C (t+1)^{\frac 13}\log^2 (2+t)\] where the constant $C$ depends only on $d_0$ and $\|\theta_0\|_{L^\infty}$.\\
Notice now that  $\widetilde{\theta}$ is $2\pi/N$-periodic in $y$,  solves the Euler equation \eqref{ecyl}, and satisfies
\[0\le\widetilde{\theta}_0\le 1, \, d_{\widetilde{\theta}_0}\le 1,\, \int_S \widetilde{\theta}_0(z)dz>0,\,\int_S x \widetilde{\theta}_0(z)dz=0.\]

We will need the following  lemma.

\begin{lemma}\label{euler_evol_lem}
Take $a\in 2\mathbb{N}$.  We have
 \begin{equation}\label{lemma_eq_evol}
 \int_{|x|>2a}\theta(z,t) dz\lesssim a^{-2}\int_0^t \left(\left(\int_{|x|>a/2} \theta(z,\tau)dz\right)^2+e^{-a/4}  \left(\int_{|x|>a/2} \theta(z,\tau)dz\right) \right)d\tau
 \end{equation}
 for any $t\ge 0$.
\end{lemma}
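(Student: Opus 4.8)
The plan is to control the escaped mass by a weighted second moment and to exploit the antisymmetry of the kernel $k_1$ together with its exponential decay. First I would introduce the weight $g(x)=(|x|-a)_+^2$ and set $J(\tau):=\int_S g(x)\theta(z,\tau)\,dz$. Since $\theta_0$ is supported in $[-1,1]\times\mathbb{T}$ and $a\in2\mathbb{N}$ forces $a\ge2$, the weight $g$ vanishes on $\mathrm{supp}\,\theta_0$, so $J(0)=0$. Because $g(x)\ge a^2$ whenever $|x|>2a$ and $\theta\ge0$, the elementary inequality $\int_{|x|>2a}\theta(z,t)\,dz\le a^{-2}J(t)$ holds; this is where the prefactor $a^{-2}$ is produced. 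Writing $m(r,\tau):=\int_{|x|>r}\theta(z,\tau)\,dz$, the lemma thus reduces to the differential estimate $\frac{d}{d\tau}J(\tau)\lesssim m(a/2,\tau)^2+e^{-a/4}m(a/2,\tau)$, which I would integrate in $\tau$ from $0$ to $t$.

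Next I would compute $\dot J$. Using incompressibility and the transport equation (justified for distributional solutions by the mollification argument of \cite{bd}), $\dot J=\int_S g'(x)u_1\theta\,dz=\int_S\int_S w(x)k_1(z-\xi)\theta(z)\theta(\xi)\,d\xi\,dz$ with $w:=g'$. Since $k_1(z-\xi)=-k_1(\xi-z)$ (the antisymmetry already used for conservation of the center of mass), symmetrizing the double integral in $z\leftrightarrow\xi$ gives
\[
\dot J=\tfrac12\int_S\int_S\bigl(w(x)-w(\xi_1)\bigr)k_1(z-\xi)\theta(z)\theta(\xi)\,d\xi\,dz.
\]
The decisive point is that $w$ is Lipschitz with $|w'|\le2$ and $w'$ supported in $\{|x|>a\}$, so $|w(x)-w(\xi_1)|\le2|x-\xi_1|$, and this difference vanishes unless $\max(|x|,|\xi_1|)>a$. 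The factor $|x-\xi_1|$ tames the singularity of $k_1$: as $|k_1(z-\xi)|\lesssim|z-\xi|^{-1}$ near the diagonal and $|k_1(z-\xi)|\lesssim e^{-|x-\xi_1|}$ for large $|x-\xi_1|$, the weighted kernel $K(z-\xi):=|x-\xi_1|\,|k_1(z-\xi)|$ is bounded by an absolute constant (an AM--GM estimate near the diagonal) and obeys $K(z-\xi)\lesssim|x-\xi_1|e^{-|x-\xi_1|}$.

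With this in hand I would estimate $|\dot J|\lesssim\int_S\int_S K(z-\xi)\theta(z)\theta(\xi)\mathbf 1_{\{\max(|x|,|\xi_1|)>a\}}\,d\xi\,dz$, and by the $z\leftrightarrow\xi$ symmetry it suffices to treat the region $|x|>a$. I split the $\xi$-integral into the escaped part $|\xi_1|>a/2$ and the bulk part $|\xi_1|\le a/2$. For the escaped part, boundedness of $K$ together with $\int_{|x|>a}\theta\le m(a/2,\tau)$ and $\int_{|\xi_1|>a/2}\theta\le m(a/2,\tau)$ yields the main term $\lesssim m(a/2,\tau)^2$; here non-negativity of $\theta$ is essential to pass from $\{|x|>a\}$ to $m(a/2,\tau)$. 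For the bulk part, $|x|>a$ and $|\xi_1|\le a/2$ force $|x-\xi_1|\ge a/2$, so $K\lesssim (a/2)e^{-a/2}\lesssim e^{-a/4}$, and with bulk mass $\int_{|\xi_1|\le a/2}\theta\le m_0\lesssim1$ and escaped mass $\le m(a/2,\tau)$ this contributes $\lesssim e^{-a/4}m(a/2,\tau)$. Combining the two pieces gives the required bound on $\dot J$.

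The main obstacle I anticipate is precisely the diagonal singularity of $k_1$: a naive pointwise bound on $u_1$ gives only $m(a/2,\tau)^{3/2}$ (or worse) from the near-diagonal self-interaction, which is too weak for the intended iteration in $a\sim4^n$. The resolution---choosing a \emph{quadratic} weight so that $g'$ is Lipschitz and then antisymmetrizing---both supplies the extra factor $a^{-2}$ and converts the singular kernel into the bounded kernel $K$, so that the self-interaction of the escaped mass genuinely costs $m(a/2,\tau)^2$. What remains is routine: verifying the global boundedness of $K$, and justifying the flux identity and the symmetrization for merely bounded, compactly supported solutions by mollification.
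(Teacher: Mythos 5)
Your proof is correct and follows essentially the same route as the paper's: a quadratic weight vanishing at $|x|=a$, whose time derivative is controlled by symmetrizing against the antisymmetric kernel $k_1$ and using that $|x-\xi_1|\,|k_1(z-\xi)|$ is bounded and decays exponentially, then splitting into $|\xi_1|>a/2$ versus $|\xi_1|\le a/2$ and integrating in time from $J(0)=0$. The only cosmetic differences are that the paper works with the one-sided weight $(x-a)^2$ on $\{x>a\}$ (treating the left half symmetrically), symmetrizes only over the region $\xi_1>a$ while using $|x-a|\le|x-\xi_1|$ for $\xi_1<a$, and discretizes the final estimate into unit strips, whereas you use the two-sided weight and a global symmetrization.
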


\begin{proof}
 For $a\in 2\mathbb{N}$, consider
\[
k_a(t):=\int_{x>a} (x-a)^2\theta(z,t)dz.
\]
If $\theta$ is 
%the classical solution, 
smooth,
taking the time derivative of $k_a$ gives
\begin{equation*}\begin{split}
 k_a'(t)&=
\int_{x>a} (x-a)^2(\partial_t\theta) dz =
-\int_{x>a} (x-a)^2(u\cdot\nabla\theta) dz\\
&=\int_{x>a} (x-a)^2(\Psi_y\theta_x
-\Psi_x \theta_y)dz
= 2\int_{x>a} (x-a)\cdot\theta\cdot(-\Psi_y) dz.
%\int_{x>a} (x-a)^2(\partial_y\partial_x\Psi\theta-\partial_x\partial_y\Psi\theta)
\end{split}\end{equation*}
Recall $-\Psi_y=u_1=k_1*\theta$.
We estimate the time derivative:
\begin{equation*}\begin{split}
&|k_a'(t)|=\left|2\int_{x>a} (x-a)\cdot \theta \cdot u_1 dz\right|\lesssim \left| \int_{x>a}\int_S \frac{(x-a)\sin(y-\xi_2)}{\cosh(x-\xi_1)-\cos(y-\xi_2)} \theta(z,t)\theta(\xi,t)d\xi dz \right|\\
&\le   \left| \int_{x>a}\int_{\xi_1<a}
\dots  \,d\xi dz %\frac{(x-a)\sin(y-\xi_2)}{\cosh(x-\xi_1)-\cos(y-\xi_2)} \theta(z,t)\theta(\xi,t)d\xi dz
\right|
+
\left| \int_{x>a}\int_{\xi_1>a}
\dots  \,d\xi dz %\frac{(x-a)\sin(y-\xi_2)}{\cosh(x-\xi_1)-\cos(y-\xi_2)} \theta(z,t)\theta(\xi,t)
\right|.
\end{split}\end{equation*}
Using the bound $|x-a|\le|x-\xi_1|$ in the first term and symmetrizing in the second one,
we get
\[
|k_a'(t)|
\lesssim  \int_{x>a}\int_S \left|\frac{(x-\xi_1)\sin(y-\xi_2)}{\cosh(x-\xi_1)-\cos(y-\xi_2)} \right| \theta(z,t)\theta(\xi,t)
 d\xi dz. \]
We observe that % for any $\delta>0$,
\[
 \left|\frac{(x-\xi_1)\sin(y-\xi_2)}{\cosh(x-\xi_1)-\cos(y-\xi_2)} \right|\lesssim (1+|x-\xi_1|)e^{-|x-\xi_1|}\lesssim
 %_\delta
 e^{-\frac{1}{2}|x-\xi_1|}\,.
\]
% We fix $\delta=1/2$.
Let us  denote   $\int_{a\le \xi<b}\theta(z,t)dz$  by $\int_a^b\theta$ for shorthand. The above estimate implies
\[
|k_a'|
\lesssim \sum_{j=-\infty}^\infty\sum_{l=a}^\infty e^{-{\frac{1}{2}}|j-l|}\left(\int_{j}^{j+1}\theta\right)\left( \int_{l}^{l+1}\theta\right).
\]
We get
\[
 \sum_{j=a}^\infty\sum_{l=a}^\infty e^{-{\frac{1}{2}}|j-l|}\left(\int_{j}^{j+1}\theta\right)\left( \int_{l}^{l+1}\theta\right)
 %\lesssim
 \le
  \left(\sum_{j=a}^\infty \int_j^{j+1}\theta\right)^2= \left(\int_a^\infty \theta\right)^2 %\quad\mbox{ and}
 \] and
  \begin{equation*}
 \begin{split}
 & \sum_{j=-\infty}^a \sum_{l=a}^\infty e^{-{\frac{1}{2}}|j-l|}\left(\int_{j}^{j+1}\theta\right)\left( \int_{l}^{l+1}\theta\right)\\
  &%\lesssim
\le
    \sum_{j=a/2}^a \sum_{l=a}^\infty e^{-{\frac{1}{2}}|j-l|}\left(\int_{j}^{j+1}\theta\right)\left( \int_{l}^{l+1}\theta\right)+
   \sum_{j=-\infty}^{a/2} \sum_{l=a}^\infty e^{-{\frac{1}{2}}|j-l|}\left(\int_{j}^{j+1}\theta\right)\left( \int_{l}^{l+1}\theta\right)\\
   &\lesssim\left(\int_{a/2}^\infty \theta\right)^2+e^{-a/4}\left(\int_{a/2}^\infty \theta\right).
 \end{split}
 \end{equation*}
Since $\int_a^\infty (x-a)^2\theta_0=0$, we can write
 \[
 \int_a^\infty  (x-a)^2\theta \lesssim \int_0^t \left(\left(\int_{a/2}^\infty \theta\right)^2+e^{-a/4}  \left(\int_{a/2}^\infty \theta\right) \right)d\tau
 \]
 and
 \[
 \int_{2a}^\infty\theta\lesssim a^{-2}\int_0^t \left(\left(\int_{a/2}^\infty \theta\right)^2+e^{-a/4}  \left(\int_{a/2}^\infty \theta\right) \right)d\tau.
 \] 
The estimate for $\int_{-\infty}^{-2a}\theta$  can be proved similarly. Thus, we get 
  \eqref{lemma_eq_evol} for smooth solutions. 
For  solutions in the sense of distributions, one can use the mollification argument following, e.g., \cite{bd}, Proposition 2.1.

 \end{proof}

We denote
\[  f_{0}(t):=\int_S \theta(z,t)dz  \quad  \mbox{ and }  \quad
f_n(t):=\int_{|x|>4^n} \theta(z,t)dz,\quad n \in \mathbb{N}.
\]
So,
$f_{0}(t)=f_{0}(0)= \int_S  \theta_0(z)dz\lesssim 1$ for $t\ge0$ and
 $f_n(0)=0$ for $n\in \mathbb{N}$.
 Taking $a=2 \cdot 4^n$ in Lemma~\ref{euler_evol_lem}, %we get for any $\delta>0$,
we get the bounds
\[
 f_{n+1}(t)\lesssim  4^{-2n}\int_0^t (f_n^2(\tau)+e^{-{\frac{1}{2}}4^n} f_n(\tau))d\tau \quad\mbox{ for any } n\ge 0, t>0
\]
and Proposition \ref{main_prop} yields  $f_n(t)\lesssim 4^{-n}  \mbox{ for any } n\ge 0, t\ge 0$. We combine these two estimates into
 \begin{equation*}%\label{sov1}
f_0(t)=c_1,  f_{n+1}(t)\le c_2 \min\Big( 4^{-2n}\int_0^t (f_n^2(\tau)+e^{-{\frac{1}{2}}4^n} f_n(\tau))d\tau,      4^{-(n+1)}\Big)   \quad\mbox{for any } n\ge 0, t\ge 0\,,
 \end{equation*}
where time-independent parameters $c_1$ and $c_2$ satisfy $0<c_1\lesssim 1,0<c_2\lesssim 1$.\smallskip

For any bounded and non-negative function $h$ and for $n\ge0$, we define the  operator $M_n$ by
\begin{equation*}%\label{sov2}
(M_n h)(t)=c_2\min\left( 4^{-2n}\int_0^t h(\tau)\cdot (h(\tau)+e^{-{\frac{1}{2}}4^n})d\tau, 4^{-(n+1)}  \right) \quad \mbox{for } t\ge0.
\end{equation*}
Without loss of generality, we can assume $c_2\ge 2$. Define $\{g_n(t)\}$ recursively by
\begin{equation*}%\label{sov3}
g_0(t)=c_1, g_{n+1}:=M_n(g_n)\
\end{equation*}
for all $t\ge 0$. We can use induction argument to show that
\begin{equation}
\label{main_lemma_technical1}
   f_{j}(t)\le g_{j}(t)
  \quad\mbox{ for any }   j\ge 0, \,t\ge 0\,.
\end{equation}
From Lemma~\ref{main_lemma},  proved in the next section, we know that there exist $n_0\in \mathbb{N}$ and positive constants $c_3,c_4,c_5$ such that
\begin{equation}\label{sera1}
g_{n+j}(t) \le c_3{4^{-n-c_42^j}}\,,
\end{equation}
for all $n\ge n_0$, $j\in \mathbb{Z}^+$, and $t\in [0,c_54^{3n}]$.

%Finally, we obtain \eqref{main_lemma_technical}
 %conclude thatsince   for any $n\ge n_0$, for any $j\ge 0$ and for any $0< t\le  c_5 4^{3n}$, we get\[f_{n+j}(t)\le  g_{n+j}(t)\le g_{n+j}(c_5 4^{3n})\le g_{n+j}(t_n)=a_{n,j}\le b_{n,j}\le c_{n,j}\le c_3 4^{-(n+c_4 2^{j})}. \] %which is exactly  \eqref{main_lemma_technical}.

We now can estimate the first component of velocity.
By  \eqref{main_lemma_technical1} and  \eqref{sera1}, % \eqref{main_lemma_technical},
 we have
  \begin{equation}\label{cor1}
   \int_{|x|\ge 4^{n+j}}\theta(z,t)dz \le c_3{4^{-n-c_42^j}}
   \end{equation}
 for any $n\ge n_0$, $j\ge 0$, and $ 0\le t\le  c_5 4^{3n}$.   If necessary, redefine $n_0$ to satisfy $e\le  c_5 4^{3(n_0-1)}$ %$ c_5 4^{3(n-1)}\le  t$ implies $t\ge e$.
     and let $${T}:= c_5 4^{3(n_0-1)}\ge e.$$ 
     Now, given any $t\ge T$, we choose $n$ to depend on $t$ in such a way that
   $ c_5 4^{3(n-1)}\le  t<  c_5 4^{3n}$.  Substituting this bound into \eqref{cor1}, we get  
  \begin{equation*}%\label{s}
   \int_{|x|\ge 4\left({\frac{t}{c_5}}\right)^{1/3}4^{j}}\theta(z,t)dz \le c_3\left({\frac{c_5}{t}}\right)^{\frac 13}{4^{-c_42^j}}.
   \end{equation*}
   for all $t\ge T$ and $j\ge 0$.\\

   For $A>1/2$ and for $t\ge {T}$, we can  take the integer $j=j(A,t)\ge 0$ such that $2^{j-1}<A\log t\le  2^j$. Then,
    \[
   \int_{|x|\ge 16A^2\left({\frac{t}{c_5}}\right)^{1/3}{
\log^2 t
   }}\theta(z,t)dz \le (c_3\cdot c_5^{\frac 13 }){t^{-(\frac 13+(c_4\log 4) A)}}.
   \]
 Introducing
 \[
 \phi(L):=16\left(\frac{L-\frac 13}{c_4\log 4}\right)^2/c_5^{\frac 13},\quad
 c_{7}:= c_3\cdot c_5^{\frac 13 }, \quad L:=\frac 13+(c_4\log 4)A\,,
 \]
  and assuming that $L>L_0:=(1/3)+(c_4\log 4)/2$, % there exists $\phi(L)>0$ such that
   we can rewrite the last inequality in more convenient form
    \begin{equation}\label{main_decay_estimate}
   \int_{|x|\ge \phi(L)t^{1/3}\log^2 t
   }\theta(z,t)dz \le c_{7}{t^{-L}}
   \end{equation}
   for all $t\ge T$. Note  that $\phi(L)\sim L^2$ for $L> L_0$. \\

%By taking $L$ large enough, we can make
%the velocity outside the range $\phi(L)t^{1/3}\log^2 t$ small enough
%see that the velocity outside the range $\phi(L)t^{1/3}\log^2 t$ is much smaller than %$t^{-L/(2+\epsilon)}$, $\phi(L)t^{-(2/3)+\delta}$ for $\delta>0$, which
%prove Theorem. Indeed,
%Now we are ready to prove Theorem.
%Let's finish the proof.
%We first observe that

For $L>L_0$, we define $$R_L(t):=
2(\phi(L)t^{1/3}\log^2 t+1)$$ for $0\le  t<\infty$. We need the following lemma.

\begin{lemma} \label{sort1}There exists  $L_1>0$ such that $$|u_1(z,t)|\le  \frac{d}{dt}R_L(t)
% =2\phi(L)t^{-2/3}\log t(\frac{1}{3}\log t +2)
 $$ holds whenever $L\ge L_1, |x|=R_L(t)$, and $t\ge {T}$.
%Let's prove the claim.
\end{lemma}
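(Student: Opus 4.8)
The plan is to run a barrier argument at the moving boundary $|x| = R_L(t)$: one shows that the flow cannot transport vorticity across this curve faster than the curve itself recedes. By the reflection symmetry $x\mapsto -x$ of the equation it suffices to treat a point $z=(R_L(t),y)$ with $x = R_L(t) > 0$. First I would record the speed of the barrier. Writing $r_0 := \phi(L)t^{1/3}\log^2 t$, so that $R_L(t) = 2r_0 + 2$, differentiation gives
\[
\dot R_L(t) = 2\phi(L)\, t^{-2/3}\log t\Big(\tfrac13\log t + 2\Big) \ge \tfrac{2}{3}\,\phi(L)\, t^{-2/3}\log^2 t
\]
for $t \ge T \ge e$, and since $\phi(L) \sim L^2$ this yields the clean lower bound $\dot R_L(t) \gtrsim L^2\, t^{-2/3}\log^2 t$.

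The heart of the matter is to bound $|u_1(z,t)|$ at $x = R_L(t)$, exploiting that this point sits at \emph{twice} the decay threshold of \eqref{main_decay_estimate}. I would start from the pointwise splitting already used in \eqref{sot4},
\[
|u_1(z,t)| \lesssim \int_{|x-\xi_1|\ge 1}\theta(\xi,t)\,e^{-|x-\xi_1|}\,d\xi + \int_{|x-\xi_1|<1}\frac{\theta(\xi,t)}{|z-\xi|}\,d\xi,
\]
and handle three regions. In the near field $|x-\xi_1| < 1$ one has $\xi_1 > x-1 = 2r_0+1 > r_0$, so the whole contribution lives in $\{|\xi_1|>r_0\}$, where \eqref{main_decay_estimate} forces $\int\theta \le c_7 t^{-L}$; the Hölder argument of \eqref{sot4} with a fixed exponent $p' > 2$ then bounds this piece by $\lesssim (c_7 t^{-L})^{1/p'} \lesssim t^{-L/p'}$. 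In the far field I split at $\xi_1 = r_0$: the part $\xi_1 > r_0$ is controlled by the mass bound $\int_{\xi_1>r_0}\theta \le c_7 t^{-L}$, while for $\xi_1 \le r_0$ the gap $x - \xi_1 \ge r_0 + 2$ produces the factor $e^{-|x-\xi_1|} \le e^{-r_0}$, so that piece is $\lesssim m_0 e^{-r_0}$. Collecting the three contributions and using $r_0 \ge \phi(L)t^{1/3}$ to absorb the exponentially small term, I obtain $|u_1(z,t)| \le C\, t^{-L/p'}$, with $C$ depending only on the fixed $p'$ and the already-fixed constant $c_7$, hence independent of both $L$ and $t$.

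It then remains to compare the two estimates. Since $p'$ is a fixed finite exponent, $t^{-L/p'}$ decays arbitrarily fast in $t$ as $L$ grows, whereas $\dot R_L$ decays only like $t^{-2/3}$; so once $L/p' > 2/3$ the function $t^{\,2/3 - L/p'}$ is decreasing on $[T,\infty)$ and bounded there by its endpoint value $T^{\,2/3-L/p'}$. Using $\log^2 t \ge 1$, the desired inequality $C\,t^{-L/p'} \le \tfrac23\phi(L)\,t^{-2/3}\log^2 t$ thus reduces to $C\,T^{\,2/3-L/p'} \lesssim L^2$, which holds for all large $L$ because the left side tends to $0$ while $L^2 \to \infty$. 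Taking $L_1$ larger than $L_0$, than $2p'/3$, and than the threshold from this last comparison proves the lemma. I expect the main obstacle to be purely bookkeeping of constants: the decay constant $c_7$ and the Hölder constant $C(\epsilon)$ must be kept independent of $L$, so that the only $L$-dependence entering the final step is the explicit $\phi(L)\sim L^2$ and the exponent $t^{-L/p'}$; this is what lets the comparison be won uniformly in $t\ge T$ by choosing $L$ large.
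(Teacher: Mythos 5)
Your proof is correct and follows essentially the same route as the paper's: the same far-field/near-field kernel splitting at (essentially) the half-radius $R_L(t)/2 = r_0+1$, the decay estimate \eqref{main_decay_estimate} applied to each piece, H\"older's inequality for the near-field singular integral (the paper fixes $p'=3$ where you keep a general fixed $p'>2$), and a final comparison of the resulting $\mathcal{O}(t^{-L/p'})$ bound against $\frac{d}{dt}R_L(t)\gtrsim \phi(L)\,t^{-2/3}\log^2 t$ using $\phi(L)\sim L^2$ to choose $L_1$. The only cosmetic differences are your splitting point $\xi_1=r_0$ instead of $R_L(t)/2$ and your absorbing the exponentially small term into the polynomial one rather than carrying it separately to the final comparison.
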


\begin{proof}
Let $L> L_0$. We have a bound \begin{equation*}\begin{split}
|u_1(z,t)|&\lesssim  \int_{|x-\xi_1|\ge 1} \theta(\xi,t)  e^{-|x-\xi_1|} d\xi +\int_{|x-\xi_1|<1} \frac{\theta(\xi,t)}{|z-\xi|}d\xi\\
%&\le c_{9}\Big[ \int_{|x-\xi_1|\ge 1} \theta(\xi,t)  e^{-|x-\xi_1|} +\int_{|x-\xi_1|<1} \frac{\theta(\xi,t)}{|z-\xi|}\Big]\\
\end{split}\end{equation*} for any $z$ and for any $t$. Notice that   $\phi(L)t^{1/3}\log^2 t\le  \min(R_L(t)/2,R_L(t)-1)$. Suppose $x=R_L(t)$, the case $x=-R_L(t)$ can be handled similarly. Thus,  for $t\ge {T}$, we get
\begin{equation*}\begin{split}
&|u_1(R(t),y,t)|\lesssim  \\
&
\int_{\xi_1<R_L(t)/2} \theta(\xi,t)  e^{-|R_L(t)-\xi_1|}d\xi+ \int_{\xi_1\ge R_L(t)/2} \theta(\xi,t)  e^{-|R_L(t)-\xi_1|} d\xi+ \int_{|R_L(t)-\xi_1|<1} \frac{\theta(\xi,t)}{|z-\xi|}d\xi \\
&\lesssim e^{-R_L(t)/2}\cdot\int_{\xi_1<R_L(t)/2} \theta(\xi,t)d\xi  + \int_{\xi_1\ge R_L(t)/2} \theta(\xi,t)d\xi +
\Big(\int_{|R_L(t)-\xi_1|<1} |{\theta(\xi,t)}|^3d\xi\Big)^{\frac{1}{3}}\,,
%\cdot\Big(\int_{|\xi_1|<1} {|\xi|}^{-\frac{3}{2}}\Big)^{\frac{2}{3}}
\\
\end{split}\end{equation*}
where we used H\"older inequality to get the  last term. Recall that
$\|\theta\|_{L^\infty(S)}\le 1, \|\theta\|_{L^1(S)}\lesssim 1$, so
\begin{equation*}\begin{split}
&|u_1(R_L(t),y,t)| \lesssim \\
&    e^{-R_L(t)/2}\cdot\int_S\theta(\xi,t) d\xi + \int_{\xi_1\ge R_L(t)/2} \theta(\xi,t)d\xi +
\|\theta\|_{L^\infty}^{\frac{2}{3}}\Big(\int_{\xi_1>R_L(t)-1 } |{\theta(\xi,t)}|d\xi\Big)^{1/3}\\
&\lesssim  e^{-(\phi(L)t^{1/3}\log^2 t+1)}  + \int_{\xi_1\ge \phi(L)t^{1/3}\log^2 t} \theta(\xi,t)d\xi +
\Big(\int_{\xi_1\ge \phi(L)t^{1/3}\log^2 t} |{\theta(\xi,t)}|d\xi\Big)^{1/3}\,.
\end{split}\end{equation*}
We now use  \eqref{main_decay_estimate}  to   get
\[
|u_1(R_L(t),y,t)|\le c_8\Bigl(  e^{-\phi(L)t^{1/3}\log^2 t}  + {t^{-L}}+
t^{-\frac L3}\Bigr)
\]
with some constant $c_8$.

The derivative of $R_L(t)$ can be computed explicitly: \[
\frac{d}{dt}R_L(t)=2\phi(L)t^{-2/3}\log t \left(\frac{1}{3}\log t +2\right).\] Recalling that $\phi(L)\sim L^2$ when $L\to\infty$, we can take  $L_1$ large enough  to have
$$ c_8\Bigl( e^{-\phi(L)t^{1/3}\log^2 t}  + {t^{-L}}+
{t^{-L/3}}\Bigr)\le  \frac{d}{dt}R_L(t) $$
uniformly in $t\ge T$ and $L\ge L_1$. \\

\end{proof}
We are ready to finish the proof of Theorem \ref{main_theorem}. We claim that there is  an absolute constant $L$ such that
\begin{equation}\label{sog1}
d_{\theta(t)}\le 2R_L(t)
\end{equation}
for all $t\ge 0$.

Indeed, notice first that  $u_{max}:=\sup_{t\ge 0}\|u_1(t)\|_{L^\infty}<\infty$ by
\eqref{bdd_speed}. Since $\phi(L)\sim L^2$, there is $L_2>0$ so that for each  $L\ge L_2$ we get
 $$R_L(t)\ge 1+  u_{max}\cdot t $$  uniformly in  $t\in [0,{T}]$. Since $\theta_0$ is supported in $[-1,1]\times \mathbb{T}$, the Euler solution  $\theta(z,t)$  is supported in $[-R_L(t),R_L(t)]\times \mathbb{T}$ for all $t\in [0,{T}]$ and  for all $L\ge L_2$.\\ %It remains to show that this property holds even for $t>{T}$.\\

Take $L\ge \max\{L_0,L_1,L_2\}$. From Lemma \ref{sort1}, we conclude that
for any particle trajectory $Z_{{(x,y)}}(t)=(X_{{(x,y)}}(t),Y_{{(x,y)}}(t))$ in Lagrangian dynamics, satisfying
\begin{equation*}\begin{cases} \frac{d}{dt}Z_{{(x,y)}}(t)&=u(Z_{{(x,y)}}(t),t)\quad \mbox{ for } t>{T}\\
Z_{{(x,y)}}({T})&= {(x,y)}
\in [-R_L({T}),R_L({T})]\times\mathbb{T}
\end{cases}, \end{equation*} we have
$Z_{{(x,y)}}(t)\in [-R_L(t),R_L(t)]\times\mathbb{T}$ for any $t\ge {T}$. Indeed, we argue by contradiction:  if there is a  particle trajectory escaping from the region, then there should be a moment $T_0\geq T$
%and a particle which lies which is at position $(x_0,y_0)$ at time $t=T$ such that  $|X_{(x_0,y_0)}(T_0)|= R_L(T_0)$ and $|u_1(Z_{(x_0,y_0)}(T_0),T_0)|>\frac{d}{dt}R_L(t_0)$.
and a point $z_0=(x_0,y_0)\in S$  such that $|x_0|=  R_L(T_0)$ and $|u_1(z_0,T_0)|>\frac{d}{dt}R_L(t_0)$, which contradicts the lemma.

Estimate \eqref{sog1} finishes the proof of Theorem. \\

\end{proof}

 %,
 %escape the region
%$$\{([-R_L(t),R_L(t)],t)\in\mathbb{R}\times \mathbb{T}\times [{T},\infty)\, | \, \}$$
%which proves that $\theta(t)$ is supported in $[-R_L(t),R_L(t)]\times \mathbb{T}$ for all $t\ge {T}$.\\

 %In sum, we have found a universal constant $L$ satisfying  $d(t)\le  2R_L(t)$ for any $t>0$.

%Thus  $\theta(t)$ is supported in $[-R_L(t),R_L(t)]\times \mathbb{T}$ for all $t>0$, which implies $d(t)\le  2R_L(t)$ for any $t>0$.

%With our choice of (large) $L$, we have for any $t\geq {T}$ and for any $|x|=R_L(t)$,
%$$|u_1(z,t)|\le  \frac{d}{dt}R_L(t) .$$

\section{Some auxiliary results}

Recall that the operator $M_n$ has been defined as
\[
%\begin{equation}\label{sov2}
(M_n h)(t)=c_2\min\left( 4^{-2n}\int_0^t h(\tau)\cdot (h(\tau)+e^{-{\frac{1}{2}}4^n})d\tau, 4^{-(n+1)}  \right) \quad \mbox{for } t\ge0
%\end{equation}
\]
and $c_2\ge 2$. Take $c_1>0$ and define $\{g_n(t)\}$ recursively by
\begin{equation}\label{sov3}
g_0(t)=c_1, g_{n+1}:=M_n(g_n)\quad \mbox{for all } t\ge 0.
\end{equation}
%for all $t\ge 0$.

 \begin{lemma}\label{main_lemma}
 There exists an integer $n_0\in\mathbb{N}$ and positive constants $c_3, c_4,c_5$ such that\\ for any $n\ge n_0$ and  for $ 0<t\le  c_5 4^{3n}$, we have
\begin{equation}\label{main_lemma_technical}
    g_{n+j}(t) \le c_3{4^{-n-c_42^j}}
  \quad\mbox{ for any }   j\ge 0.
\end{equation}

 \end{lemma}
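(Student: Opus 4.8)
The plan is to fix $n\ge n_0$ and prove \eqref{main_lemma_technical} by induction on $j$, working on the single interval $t\in[0,c_54^{3n}]$, which is determined by $n$ alone. Two structural features of the operators $M_n$ drive the argument. First, the second entry of the $\min$ in the definition of $M_n$ gives the cheap a priori bound
\[
g_m(t)\le c_2 4^{-m}\qquad\text{for all } m\ge 1,\ t\ge 0,
\]
since $g_m=M_{m-1}(g_{m-1})\le c_2 4^{-m}$. Second, $M_n$ is \emph{quadratic} in its argument, so each application essentially squares the relative smallness of $g_{n+j}$; it is precisely this squaring that produces the doubly-exponential factor $4^{-c_42^j}$.

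For the base case $j=0$ I would simply invoke the cheap bound: $g_n(t)\le c_2 4^{-n}\le c_3 4^{-n-c_4}$, which holds provided $c_3\ge c_2 4^{c_4}$, a constraint I am free to impose. For the inductive step, assume $g_{n+j}(\tau)\le B_j:=c_3 4^{-n-c_42^j}$ throughout $[0,c_54^{3n}]$. Dropping the $\min$ and inserting this bound into the integral defining $g_{n+j+1}=M_{n+j}(g_{n+j})$, together with $t\le c_54^{3n}$, gives
\[
g_{n+j+1}(t)\le c_2 c_5\, 4^{3n}4^{-2(n+j)}\Bigl(B_j^2+e^{-\frac12 4^{n+j}}B_j\Bigr),
\]
and I would estimate the two terms separately. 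For the quadratic term, substituting $B_j^2=c_3^2 4^{-2n-2c_42^j}$ collapses the powers of $4$ to $4^{-n-2j-2c_42^j}$; since the target is $\tfrac12 B_{j+1}=\tfrac12 c_3 4^{-n-2c_42^j}$, the requirement reduces to $c_2 c_5 c_3\,4^{-2j}\le\tfrac12$, i.e.\ $c_2 c_5 c_3\le\tfrac12$ in the worst case $j=0$. For the exponential term the powers of $4$ collapse to $4^{-2j-c_42^j}e^{-\frac12 4^{n+j}}$, and comparing again with $\tfrac12 B_{j+1}$ the requirement becomes $2c_2 c_5\,4^{n}4^{-2j}4^{c_42^j}\le e^{\frac12 4^{n+j}}$.

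Adding the two halves yields $B_{j+1}$, closing the induction. The only genuinely delicate step is verifying this last inequality \emph{uniformly in} $j\ge 0$: the decay $e^{-\frac12 4^{n+j}}$ must dominate the competing factor $4^{c_42^j}=e^{(\log 4)c_42^j}$, which works because $4^{n+j}=4^n4^j$ beats $2^j$ in the exponent once $n\ge n_0$, while the linear-in-$n$ loss $4^n$ on the left is negligible against $4^{n+j}$ on the right. Concretely I would split the exponent budget $\tfrac12 4^{n+j}$ into two halves and use one to absorb the $n\log 4$ loss and the other to dominate $c_4 2^j\log 4$ (dividing through by $2^j$ reduces this to $4^n\gtrsim c_4$), each handled by taking $n_0$ large. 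The constants are then fixed in order — choose $c_4$ (say $c_4=1$), set $c_3=c_2 4^{c_4}$, take $c_5\le(2c_2 c_3)^{-1}$, and finally pick $n_0$ large enough for the exponential comparison — all mutually consistent. I expect the bookkeeping of exponents and the uniform-in-$j$ control of the exponential term to be the main obstacle, the rest being a clean induction.
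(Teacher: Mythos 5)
Your proof is correct, and it takes a genuinely different route from the paper's. The paper works with hitting times: $t_n$ is defined as the first time $g_n$ reaches its cap $c_2 4^{-n}$, and a two-sided estimate $c_5 4^{3n}\le t_n\le c_6 4^{3n}$ is proved (the lower bound from the cheap bound $g_n\le c_2 4^{-n}$, the upper bound from the observation that once $g_n$ saturates, $g_{n+1}$ grows at a definite rate). Monotonicity of the $g_m$ then reduces everything to a discrete recursion for $a_{n,j}:=g_{n+j}(t_n)$, which is analyzed in a separate statement (Proposition \ref{kur}); there the exponential term is absorbed by first proving the lower bound $b_{n,j}\ge e^{-\frac12 4^{n+j}}$, and the doubling exponent is extracted by writing $b=4^{-p}$ and tracking a linear recursion for the exponents. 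You bypass all of this: since the lemma only asserts existence of \emph{some} positive $c_5$, you are free to choose $c_5$ small, and you then run the induction on $j$ directly over the fixed interval $[0,c_54^{3n}]$ with the target $B_j=c_34^{-n-c_42^j}$ itself as the inductive hypothesis, handling the cross term $e^{-\frac12 4^{n+j}}B_j$ head-on rather than absorbing it. Your bookkeeping checks out: the quadratic term contributes $(c_2c_5c_3\,4^{-2j})B_{j+1}$, so $c_2c_5c_3\le\tfrac12$ suffices; the exponential term requires $2c_2c_5\,4^{\,n-2j+c_42^j}\le e^{\frac12 4^{n+j}}$, which indeed holds uniformly in $j\ge0$ once $n_0$ is large (since $4^{n+j}/2^j\ge 4^n$); and your order of fixing $c_4$, then $c_3=c_24^{c_4}$, then $c_5\le(2c_2c_3)^{-1}$, then $n_0$, has no circularity. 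What the paper's longer route buys is dynamical information: the upper bound $t_n\le c_64^{3n}$ identifies $4^{3n}$ as the genuine saturation timescale of the iteration, with $c_5$ dictated by the dynamics rather than chosen small. For the lemma as stated, and for its use in the proof of Theorem \ref{main_theorem} (which only needs some positive $c_5$), your shorter, self-contained induction is fully sufficient.
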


\begin{proof}
  Without loss of generality, we assume $c_2\ge 2$.
For any bounded and non-negative function $h$ and for $n\ge0$, $M_nh(\cdot)$ is non-decreasing in $t$. We denote by $T_n(h)$  the first time when $M_nh(t)=c_24^{-n-1}$. If $h$ is non-decreasing and  if $h$ is not identically zero,  we have
$0<T_n(h)<\infty$. %is  finite
Moreover,
\begin{equation*}\label{sde}
T_n(h_1)\le T_n(h_2)
\end{equation*}
if $h_1\ge h_2\ge 0$ for all $t$. Function $g_n$ defined in \eqref{sov3} satisfies the following properties.

$\bullet$\quad For all $n\in \mathbb{Z}^+$, $g_n$ is non-decreasing, bounded, non-negative, and $g_n(t)>0$ for any $t>0$.

$\bullet$\quad Denote $t_n:=T_{n-1}(g_{n-1})<\infty$ for $n\ge1$ and $t_0:=0$. In other words, $t_n=\min \{\tau : g_n(\tau)=c_24^{-n}\}$ for $n\ge1$ . We will need some estimates on  $t_n$ later on so we start with getting a lower bound.

 Since $e^{-\alpha}<1/ \alpha$ for $\alpha>0$ and $c_2\ge 2$, we get
$e^{-(\frac{1}{2})4^n}\le   2 \cdot 4^{-n}\le  c_24^{-n}$. Then, the estimate
 $g_n\le c_24^{-n}$ yields
\[
g_{n+1}(t)\le c_24^{-2n}\int_0^t c_24^{-n}(c_24^{-n}+e^{-{\frac{1}{2}}4^n})d\tau
\le c_24^{-2n}\int_0^t 2(c_2)^2 4^{-2n}d\tau\le 2c^3_2 t4^{-4n}.
\]
%\[g_{n+1}(t)\le 2c_1 (c_2)^2 t4^{-4n}\]
Thus,
\begin{equation}\label{sov5}
t_{n+1}\ge 4^{3n-1}/(2 c^2_2), \quad {\rm for\,} n\in 
\mathbb{N}.
%\mathbb{Z}^+.
\end{equation}
% Suppose $t_n$ is given.
An upper bound on $t_n$ can be obtained as follows. Since $g_n\ge 0$ on $t\in [0,t_n]$ and $g_{n+1}=c_24^{-(n+1)}$ for $t\ge t_{n+1}$, % and  for $n\ge 1$,
%  \eqref{sde} gives the following bound:
we have   \begin{equation}t_{n+1}\le t_n+4^{3n-1}/(c^2_2), \, n\in \mathbb{N}\,.\label{sov12}\end{equation} Indeed,
this inequality holds trivially if $t_{n+1}\le t_n$. For the case  $t_{n+1}\ge t_n$, we have
%, for any $n\ge 0$,
%This is obtained from the following inequality, which holds in the case $t_{n+1}\ge t_n$,
\[
c_2 4^{-(n+1)}=g_{n+1}(t_{{n+1}})=M_n(g_n)(t_{n+1})\ge c_2 4^{-2n}\int_{0}^{t_{n+1}}g^2_n(\tau) dt
\ge c_2 4^{-2n}\int_{t_n}^{t_{n+1}} c_2^2 4^{-2n}dt.
\]
%\[
%c_1 4^{-2n}\int_{t_n}^{t_{n+1}} (c_2)^2 4^{-2n}dt\le c_1 4^{-2n}\int_{t_n}^{t_{n+1}}(g_n(\tau))^2 dt\le M_n(g_n)(t_{n+1})=g_{n+1}(t_{{n+1}})\le c_2 4^{-(n+1)}
%\]
Summing up \eqref{sov12} in $n$, we get
\[
t_n\le t_1+\sum_{k=1}^n 4^{3k-4}/( c^2_2)\le t_1+4^{3n-3}/(c^2_2)\,.
\]
Since $t_1=(4c_1(c_1+e^{-\frac 12}))^{-1}$, the last estimate and \eqref{sov5} imply that there are positive constants $c_5$ and $c_6$ so that
\begin{equation}\label{sov6}
c_5 4^{3n}\le t_n\le  c_6 4^{3n}, \quad n\in \mathbb{N}\,.
\end{equation}
%Note %$t_n\sim 4^{3n}$.\\ $t_n<t_{n+1}$ %for any $n\ge 0$  because $t_n\le c_74^{3n}<c_5 4^{3(n+1)}\le t_{n+1} $. \\

%Thus,\[t_n\sim 4^{3n}\]

%claim:  $e^{-{\frac{1}{2}}4^{(n+j)}}\le g_{(n+j)}(t_n)$for $j\ge ???c_4???$

%Let the measure $\mu_n$ be defined by its density $\mu_n'=\chi_{[0,t_n]}$. Notice that $\mu(0,t]\le t$. Then, we can write \[g_{n+1}=c_14^{-2n}\int_0^t g^2_nd\mu_{n+1}\]for all $t$.

$\bullet$ \quad Let $n\ge 1$. Since  $g_k$ is non-decreasing in $t$, %and $t_k<t_{k+1}$,
we can write the following bound for every $j\in \mathbb{Z}^+$:
\begin{equation*}\begin{split}\label{sov7}
g_{(n+j)+1}(t_n)&\le   c_2 4^{-2(n+j)} \int_0^{t_n}g_{(n+j)}(\tau)\cdot(g_{(n+j)}(\tau)+e^{-{\frac{1}{2}}4^{(n+j)}}) d\tau\\
&\le c_2 4^{-2(n+j)} g_{(n+j)}(t_n)\cdot(g_{(n+j)}(t_n)+e^{-{\frac{1}{2}}4^{(n+j)}})\cdot t_n\\
&\le c_2 4^{-2(n+j)} g_{(n+j)}(t_n)\cdot(g_{(n+j)}(t_n)+e^{-{\frac{1}{2}}4^{(n+j)}})\cdot c_6 4^{3n}\,,
\end{split}\end{equation*}
where we used \eqref{sov6} to bound $t_n$ in the last inequality. \bigskip
%\[g_{(n+j)+1}(t)\le c_1 4^{-2(n+j)}g_{n+j}^2(t)\mu_{n+j+1}(0,t)\le  4^{-2(n+j)}g_{n+j}^2(t)t\]
%Take $t=t_n$, use $g_n(t_n)=4^{-n}$, $t_n\sim 4^{3n}$, and

For shorthand,  let's denote
$a_{n,j}:=g_{n+j}(t_n)$ for $  j\ge 0, n\in \mathbb{N}$. Then, we can write
\[ a_{n,(j+1)}\le  \min \Bigl( c_2 4^{-2(n+j)} a_{n,j}\cdot(a_{n,j}+e^{-{\frac{1}{2}}4^{(n+j)}})\cdot c_6 4^{3n},   c_2 4^{-(n+j+1)} \Bigr)\,. \]
Notice also that $a_{n,0}=c_2 4^{-n}$.  The induction argument gives $a_{n,j}\le b_{n,j}$ where $\{b_{n,j}\}$ are introduced in \eqref{pop} a few lines below. 
Since $g_{n+j}(t)\le a_{n,j}$ for all $t\le t_n$ and $t_n\ge c_54^{3n}$, the estimate \eqref{main_lemma_technical} now follows from Proposition \ref{kur}. 
The proof is finished.

\end{proof}

Let $c_2,c_6$ be positive constants and $c_2\ge 2$. For each $n\in \mathbb{N}$, define $\{b_{n,j}\}_{j=0}^\infty$ recursively by
$b_{n,0}:=c_24^{-n}$ and
 \begin{equation}\label{pop}
  b_{n,(j+1)}= \min \Big(c_2 4^{-2(n+j)} b_{n,j}\cdot(b_{n,j}+e^{-{\frac{1}{2}}4^{(n+j)}})\cdot c_6 4^{3n},c_2 4^{-(n+j+1)}\Big)\quad \mbox{ for } j\ge 0. \end{equation}

\begin{proposition} \label{kur} There are  positive constants $c_3, c_4$ and $n_0\in \mathbb{N}$, such that
\[
 b_{n,j}\le c_3{4^{-n-c_42^j}}
  \quad\mbox{ for all }  n\ge n_0, j\in \mathbb{Z}^+\,.
\]
\end{proposition}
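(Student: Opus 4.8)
The plan is to prove the estimate by induction on $j$, after a substitution that strips away the $n$-dependence. Setting $b_{n,j}=4^{-n}v_{n,j}$, the factor $4^{n-2j}$ in the first entry of the $\min$ exactly cancels the $4^{-2n}$ produced by $b_{n,j}^2$, so the quadratic branch of \eqref{pop} becomes $v_{n,(j+1)}\le c_2c_6 4^{-2j}\big(v_{n,j}^2+4^n v_{n,j}e^{-\frac12 4^{n+j}}\big)$, with $n$-independent initial value $v_{n,0}=c_2$, while the second entry of the $\min$ gives the $n$-independent ceiling $v_{n,j}\le c_2 4^{-(j+1)}$, and in particular $v_{n,j}\le c_2$. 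In these variables the target reads $v_{n,j}\le c_3 4^{-c_42^j}$, and apart from the exponential correction the dynamics of $v_{n,j}$ is genuinely $n$-free. I would split the range of $j$ into a constant (i.e.\ $n$-independent) initial range $0\le j\le j_1$, handled by the ceiling, and a quadratic regime $j\ge j_1$, where the squaring map produces the doubling $2^j$.

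First I would dispose of the exponential correction. Using $v_{n,j}\le c_2$, the extra term is bounded by $c_2c_6 4^{-2j}\cdot 4^n v_{n,j}e^{-\frac12 4^{n+j}}\le c_2^2 c_6 4^{n-2j}e^{-\frac12 4^{n+j}}$, and after taking logarithms the claim that this is at most half of the target $\tfrac12 c_3 4^{-c_42^{j+1}}$ reduces to $\frac{4^{n+j}}{2\ln 4}\ge n+c_42^{j+1}-2j+\log_4\frac{2c_2^2c_6}{c_3}$. The right-hand side grows at most like $n+2^{j}$, whereas the left-hand side grows double-exponentially in $n+j$; hence there is $n_0$, depending only on $c_2,c_3,c_4,c_6$, for which this holds uniformly in $j\ge 0$. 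From here on the exponential term never contributes more than half the target and may be ignored.

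The main induction then controls the purely quadratic recursion $v_{n,(j+1)}\le c_2c_6 4^{-2j}v_{n,j}^2$. For the step from $j$ to $j+1$ I feed in $v_{n,j}\le c_3 4^{-c_42^j}$ and estimate $c_2c_6 4^{-2j}v_{n,j}^2\le c_2c_6c_3^2 4^{-2j}4^{-c_42^{j+1}}$, which is $\le\tfrac12 c_3 4^{-c_42^{j+1}}$ exactly when $4^{2j}\ge 2c_2c_6c_3$, i.e.\ once $j\ge j_1:=\big\lceil\tfrac12\log_4(2c_2c_6c_3)\big\rceil$; together with the exponential estimate this closes the step for all $j\ge j_1$. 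This is the mechanism creating the $2^j$ in the exponent: each application squares $v_{n,j}$, doubling $c_42^j$, while the shrinking prefactor $4^{-2j}$ makes the squaring contractive past the constant threshold $j_1$. For the base range $0\le j\le j_1$ I simply use $v_{n,j}\le c_2$, which yields $v_{n,j}\le c_3 4^{-c_42^j}$ provided $c_3 4^{-c_42^{j_1}}\ge c_2$.

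The consistent choice of constants is where care is needed to avoid circularity, and I regard it as the main obstacle. The base range forces $c_3\ge c_2 4^{c_42^{j_1}}$, while the inductive step forces $j_1\ge\tfrac12\log_4(2c_2c_6c_3)$, and these could spiral if $c_42^{j_1}$ were large. The resolution is to pin the product $c_42^{j_1}=1$: choose $j_1$ first, large enough that $2j_1\ge\log_4(8c_2^2c_6)+1$ and $j_1\ge 2$, then set $c_4:=2^{-j_1}$ (so $c_4<\tfrac12$, as used above) and $c_3:=4c_2$. With these choices $c_3 4^{-c_42^{j_1}}=c_2$ makes the base range hold, and $\tfrac12\log_4(2c_2c_6c_3)=\tfrac12\log_4(8c_2^2c_6)\le j_1$ makes the inductive threshold valid, so both constraints are met at once. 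Taking $n_0$ as in the second step then finishes the argument, with $c_3,c_4$ and $n_0$ depending only on $c_2,c_6$.
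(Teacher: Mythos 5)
Your proof is correct, and its skeleton is the same as the paper's: change variables to factor out $4^{-n}$, use the ceiling from the second entry of the $\min$ for an initial range of $j$, and let the quadratic branch double the exponent past a constant threshold, with the constants pinned so the two regimes match. Where you genuinely diverge is the treatment of the error term $e^{-\frac{1}{2}4^{n+j}}$. The paper first proves, by induction in $j$, the lower bound $b_{n,j}\ge e^{-\frac{1}{2}4^{n+j}}$, which lets it absorb the error via $b_{n,j}+e^{-\frac{1}{2}4^{n+j}}\le 2b_{n,j}$; the recursion then becomes purely quadratic, is compared to an exact majorant $c_{n,j}$, and is analyzed in logarithmic variables $c_{n,j}=4^{-n-q_{n,j}}$, where the $\min$ becomes a $\max$ and the key step is the linear doubling $q_{n,(j+1)}\ge 2q_{n,j}$. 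This absorption costs only the very mild condition $2c_2c_64^{3n_0}\ge 4$ on $n_0$, and the recursion one iterates never refers to the target bound. You instead keep the error term and beat it down directly against half of the target $\tfrac12 c_3 4^{-c_42^{j+1}}$, uniformly in $j\ge 0$, for $n\ge n_0$; this is valid (the left side $4^{n+j}/(2\ln 4)$ does dominate $n+c_42^{j+1}-2j+\mathrm{const}$ once $n_0$ is large, though this uniformity check deserves a line of justification rather than just "grows double-exponentially"), but it makes $n_0$ depend on the final constants $c_3,c_4$, so the order of choices --- $j_1$ from $c_2,c_6$, then $c_4=2^{-j_1}$ and $c_3=4c_2$, then $n_0$ --- has to be policed, which you do correctly; your device of pinning $c_42^{j_1}=1$ is a clean way to break the potential circularity, and is essentially what the paper's choice $c_4=2^{-j_0}$, $c_3=4^{\alpha+2^{-j_0}}$ accomplishes. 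One cosmetic slip: the ceiling should read $v_{n,j}\le c_24^{-j}$ for $j\ge1$ (or $v_{n,j+1}\le c_24^{-(j+1)}$), not $v_{n,j}\le c_24^{-(j+1)}$, but you only ever use $v_{n,j}\le c_2$, so nothing is affected. Net comparison: the paper's lower-bound trick yields a cleaner, target-independent recursion and an essentially free $n_0$; your perturbation argument avoids the extra induction entirely and keeps the whole proof a single induction on $j$, at the price of a larger (still harmless) $n_0$ and a slightly more delicate bookkeeping of constants.
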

\begin{proof}
For a later use, take  $n_0\in \mathbb{N}$ so large that
\begin{equation}\label{n_0}
2c_2c_64^{3n_0}\ge 4.
\end{equation}
From now on, let $n\ge n_0$.
We first claim that
\begin{equation}\label{sov8}
b_{n,j}\ge   e^{-{\frac{1}{2}}4^{(n+j)}}
\end{equation}
for all $j\ge 0$ and for all $n\ge n_0$. Indeed, it can be shown by an induction in $j$: we know
$$b_{n,0}=c_24^{-n}\ge 2\cdot 4^{-n}\ge e^{-{\frac{1}{2}}4^{n}}  $$ because $%\delta>1/2,,
c_2\ge 2$.
Suppose
$b_{n,j}\ge  e^{-{\frac{1}{2}}4^{(n+j)}}$ for some $j\ge0$. We need to show $$b_{n,(j+1)}\ge  e^{-{\frac{1}{2}}4^{(n+j+1)}}
%= (e^{-{\frac{1}{2}}4^{(n+j)}})^4
.$$
Recall that $b_{n,(j+1)}$ is either $\left(c_2 4^{-2(n+j)} b_{n,j}\cdot(b_{n,j}+e^{-{\frac{1}{2}}4^{(n+j)}})\cdot c_6 4^{3n}\right)$ or $\left( c_2 4^{-(n+j+1)}\right)$.
In the former case,
\begin{equation*}\begin{split}
b_{n,(j+1)}&\ge 2 c_2 c_6 4^{3n} (4^{-(n+j)})^2(e^{-{\frac{1}{2}}4^{(n+j)}})^2
\ge 2 c_2 c_6 4^{3n_0}      (4^{-(n+j)})^2(e^{-{\frac{1}{2}}4^{(n+j)}})^2\\
&\ge 4  (4^{-(n+j)})^2(e^{-{\frac{1}{2}}4^{(n+j)}})^2
\ge   (  e^{-{\frac{1}{2}}4^{(n+j)}})^2(e^{-{\frac{1}{2}}4^{(n+j)}})^2=  e^{-{\frac{1}{2}}4^{(n+j+1)}}\,,
\end{split}\end{equation*} where we use \eqref{n_0} for the third inequality.
%Need $$ 2 c_1 c_7 4^{3n} (4^{-(n+j)})^2 \ge
%(e^{-{\frac{1}{2}}4^{(n+j)}})^2 $$
%So need $$ 2 c_1 c_7 4^{3n}   \ge
%1$$
If, however, $b_{n,(j+1)}=c_2 4^{-(n+j+1)}$, then
$b_{n,(j+1)}\ge 2\cdot 4^{-(n+j+1)} \ge e^{-{\frac{1}{2}}4^{(n+j+1)}}$. Thus, \eqref{sov8} is proved.\\ %Thus we have shown $b_{n,j}\ge   e^{-{\frac{1}{2}}4^{(n+j)}}$ for   $j\ge 0$ and for   $n\ge n_0$.\\

By the claim, for  all $n\ge n_0$ and $j\ge 0$,  we get \[ b_{n,(j+1)}\le   \min \Big(2c_2 4^{-2(n+j)} (b_{n,j})^2\cdot c_6 4^{3n},c_2 4^{-(n+j+1)}\Big). \]
To get the needed bound on $b_{n,j}$, we again argue by comparison to exact recursion.
Define $\{c_{n,j}\}_{j=0}^\infty$ by
$c_{n,0}:=b_{n,0}=c_24^{-n}$ and
by the following iteration: \[ c_{n,(j+1)}= \min \Big(2c_2 4^{-2(n+j)} (c_{n,j})^2 \cdot c_6 4^{3n},c_2 4^{-(n+j+1)}\Big) \quad \mbox{ for } j\ge 0. \]
Then, we clearly have $c_{n,j}\ge b_{n,j}$ for $j\ge 0$ and for   $n\ge n_0$.\\

To iterate the formula for $c_{n,j}$, it is convenient to rewrite it in the following form
 \[ c_{n,(j+1)}= \min \Big(  4^\beta 4^{n-2j} (c_{n,j})^2 , 4^\alpha  4^{-(n+j+1)}\Big) \quad \mbox{ for } j\ge 0\,, \]
where real $\alpha$ and $\beta$ are defined by $2 c_2c_6=4^\beta$ and $c_2=4^\alpha$. If we represent $c_{n,j}$ as  $c_{n,j}=4^{-p_{n,j}}$,
then $p_{n,0}=n-\alpha$ and
$$p_{n,(j+1)}= \max\Big( -\beta-n+2j+2p_{n,j},-\alpha+n+j+1\Big) \quad \mbox{ for } j\ge 0.$$
We further write $p_{n,j}=n+q_{n,j}$ and notice that
%$$n+q_{n,(j+1)}= \max\Big( -\beta-n+2j+2(n+q_{n,j}),-\alpha+n+j+1 \Big).$$ So
$$
q_{n,(j+1)}= \max\Big( -\beta+2(j+q_{n,j}) ,
-\alpha +j+1 \Big)  \quad \mbox{ for } j\ge 0.
$$
Take the smallest $j_0\in\mathbb{N}$ for which
$j_0\ge  \alpha+1$ and $2j_0\ge \beta$.
Then, we have $$q_{n,j_0}
\ge -\alpha + (j_0-1)+1
\ge 1
$$ and, for any $j\ge j_0$,
$$
q_{n,(j+1)}\ge  -\beta+2(j+q_{n,j})\ge  -\beta+2j_0+2q_{n,j} \ge 2q_{n,j}.
$$
It implies that, for any $j\ge j_0$, we get
$$ q_{n,j}\ge 2^{j-j_0}$$ and then $$  p_{n,j}\ge n+2^{j-j_0}.$$
In other words, for any $n\ge n_0$ and for any $j\ge j_0$,
we have
\begin{equation}c_{n,j}\le  4^{-(n+2^{j-j_0})}
=4^{-(n+(2^{-j_0})2^{j})}.\label{sov9}
\end{equation}
\\
We claim now that for any $n\ge n_0$ and for any $j\ge 0$,
one has
\begin{equation}\label{sov99}
c_{n,j}\le  4^{\alpha +(2^{-j_0}) } 4^{-(n+(2^{-j_0})2^{j})}.
\end{equation}
Indeed, for $j\ge j_0$, this follows from $\alpha\ge 0$ and \eqref{sov9}.
 From the definition of $c_{n,j}$, we get
 $ c_{n,j}\le   4^\alpha  4^{-(n+j)}$   for any $j\ge 0.$
 Thus, the case $j=0$ is trivial.
For $1\le  j\le  j_0$, we use an elementary bound $4^{-j}\le  4^{-(2^{-j_0})2^{j}}.$\\

Taking $c_3:=4^{\alpha +(2^{-j_0}) }$ and
$c_4:=2^{-j_0}$ in \eqref{sov99}, we finish the proof of the proposition. 
\end{proof}

{\Large \section*{acknowledgement}}
The work of KC was supported by NRF-2015R1D1A1A01058614, by the POSCO Science Fellowship of POSCO TJ Park Foundation, and by
 the Research Fund (1.170045.01) of UNIST(Ulsan National Institute of Science \& Technology). The work of SD on the first three sections
was supported by RSF-14-21-00025 and his research on the rest of the paper was supported by NSF Grant DMS-1464479.

\end{document}